\newtheorem{theorem}{Theorem}[section]
\newtheorem{definition}[theorem]{Definition}
\newtheorem{lemma}[theorem]{Lemma}
\newtheorem{corollary}[theorem]{Corollary}
\newtheorem{proposition}[theorem]{Proposition}
\newtheorem{example}[theorem]{Example}
\newtheorem{remark}[theorem]{Remark}
\begin{document}
\title[Characterization of restricted type spaces]{Characterization of the restricted \\  type spaces $R(X)$}
\author{Javier Soria}
\address{Department of  Applied Mathematics and Analysis, University of Barcelona, Gran Via 585, E-08007 Barcelona, Spain}
\email{soria@ub.edu}

\author{Pedro Tradacete}
\address{Mathematics Department, University Carlos III de Madrid, 28911 Legan\'es, Madrid, Spain.}
\email{ptradace@math.uc3m.es}

\thanks{Both authors have been partially supported by the Spanish Government Grant MTM2010-14946. The second author has been partially supported by the Spanish Government Grant MTM2012-31286 and Juan de la Cierva program}

\subjclass[2000]{26D10, 46E30}
\keywords{Rearrangement invariant spaces, Lorentz spaces, restricted type, Hardy operator, optimal range}

\begin{abstract}
We study functorial properties of the spaces $R(X)$, which have been recently introduced as a central tool in the analysis of the Hardy operator minus the identity on decreasing functions. In particular, we provide conditions on a minimal Lorentz space $\Lambda_{\varphi}$ so that the equation $R(X)=\Lambda_{\varphi}$ has a solution within the category of rearrangement invariant  (r.i.) spaces. Moreover, we show that if $R(X)=\Lambda_{\varphi}$, then we can always take $X$ to be the minimal r.i.\ Banach  range space for the Hardy operator defined in $\Lambda_{\varphi}$.
\end{abstract}
\maketitle

\thispagestyle{empty}

\section{Introduction}
Let $X$ be a rearrangement invariant space (r.i.) on $(0,\infty)$, satisfying that the function $1/{(1+s)}\in X$. Associated with $X$, we can consider the space $R(X)$, introduced in \cite{Soria:10} (which appears naturally in the study of the norm of the Hardy operator minus the identity in the cone
of radially decreasing functions \cite{Boza-Soria}). This is defined as the minimal Lorentz function space $\Lambda_{W_X}$, with
\begin{equation}\label{wxeg}
W_X(t)=\bigg\|\frac1{1+\frac{\cdot}{t}}\bigg\|_X=\|E_{1/t}g\|_X,
\end{equation}
where $g(s)={1}/{(1+s)}$ and $E_t$ denotes the usual dilation operator (cf.\ \cite[\S 3]{Bennett-Sharpley}). It can be noted that the function $g(s)=1/{(1+s)}$ belongs to an r.i. space $X$ if and only if the inclusion $(L^{1,\infty}\cap L^\infty)\subset X$  holds \cite{Soria:10}.

Recall that, for a quasi-concave function $\varphi$, that is, an increasing function such that $\varphi(t)/t$ is decreasing and $\varphi(t)\neq0$ for $t>0$ (cf. \cite[p. 69]{Bennett-Sharpley}), the minimal Lorentz space $\Lambda_\varphi$ is defined as
$$
\Lambda_\varphi=\bigg\{f:\Vert f\Vert_{\Lambda_\varphi}=\int_0^\infty f^*(t)\,d\varphi(t)<\infty\bigg\},
$$
where $f^*$ is the decreasing rearrangement of $f$ \cite{Bennett-Sharpley}. Similarly, the maximal Lorentz space (or Marcinkiewicz space) $M_\varphi$ is the r.i.\ space of all measurable functions $f$ such that
$$
M_\varphi=\bigg\{f:\|f\|_{M_\varphi}=\sup_{t>0} f^{**}(t)\varphi(t)<\infty\bigg\},
$$
where
$$
f^{**}(t)=\frac1t\int_0^tf^*(s)\,ds.
$$
Another important space associated to $\varphi$ is the weak-type Lorentz space
\begin{equation}\label{laui}
\Lambda_\varphi^{1,\infty}=\bigg\{f:\Vert f\Vert_{\Lambda_\varphi^{1,\infty}}=\sup_{t>0} f^{*}(t)\varphi(t)<\infty\bigg\},
\end{equation}
and it is easy to prove that $\Lambda_\varphi\subset M_\varphi\subset\Lambda_\varphi^{1,\infty}$. Note that, in general, $\Lambda_\varphi^{1,\infty}$ is a quasi-Banach space which need not be locally convex (cf. \cite{CPSS}).

For a given r.i.\ space $X$, the fundamental function $\varphi_X$ is defined as
$$
\varphi_X(t)=\|\chi_A\|_X, \hbox{ where } |A|=t.
$$
On an r.i. space, this expression is independent of the set $A$, so $\varphi_X$ is a well-defined quasi-concave function. This allows us to consider the minimal and maximal spaces associated with $X$: $\Lambda(X)=\Lambda_{\varphi_X}$ and $M(X)=\Lambda_{\varphi_X}$. It is well known (see \cite{Bennett-Sharpley}) that for every r.i. space $X$ we have
$$
\Lambda(X)\subset X\subset M(X).
$$
Now, as noted in \cite{Rodriguez-Soria}, if the space $X$ satisfies the condition mentioned above that $g(s)=1/(1+s)$ belongs to $X$, then we can extend this chain of inclusions as follows
$$
R(X)\subset\Lambda(X)\subset X\subset M(X).
$$

It was proved in \cite{Rodriguez-Soria} that every  $W_X$ as in \eqref{wxeg} satisfies $W_X(t)\geq C t\log(1+1/{t})$, for some constant $C>0$. Our main interest is to consider a converse result, namely, whether every Lorentz function space $\Lambda_\varphi$ whose fundamental function $\varphi$ satisfies the inequality
\begin{equation}\label{filo}
\varphi(t)\geq Ct\log(1+1/{t}),
\end{equation}
can be equal to $R(X)$, for some r.i.\ $X$.

It is known that this question has a positive answer if the upper fundamental index of the space $X$ (see \cite{Bennett-Sharpley})
$$
\overline{\beta}_X=\inf_{s>1}\frac{\log\overline{\varphi}_X(s)}{\log s},
$$
where
$$
\overline{\varphi_X}(s)=\sup_{t>0}\frac{\varphi_X(st)}{\varphi_X(t)},
$$
satisfies $\overline{\beta}_X<1$. Indeed, \cite[Theorem 2.2]{Rodriguez-Soria} asserts that $\overline{\beta}_X<1$ is actually equivalent to the identity $R(X)=\Lambda(X)$. Therefore, in this case, $R$ is constant on all r.i.\ spaces having the same fundamental function $\varphi$; i.e., those $X$ for which $\Lambda_{\varphi}\subset X\subset M_{\varphi}$ \cite{Bennett-Sharpley}.

In our study of the equation $R(X)=\Lambda_\varphi$, we will elaborate first on its connection with the optimal range for the Hardy operator on $\Lambda_\varphi$ (provided such space exists). This will allow us to find a solution to the equation when the space $X$ is only an  r.i.\ quasi-Banach space. In the remaining sections we will provide conditions on a quasi-concave function $\varphi$ satisfying \eqref{filo} in order to have $\Lambda_\varphi=R(X)$, with $X$ being a Marcinkiewicz space or a Lorentz space.

The terminology used in this paper follows the monograph \cite{Bennett-Sharpley}, to which the reader is referred for further explanations concerning rearrangement invariant spaces and related concepts.

\section{Optimal range for the Hardy operator}

We recall the definition of  the Hardy operator in $\mathbb{R}^+$:
\begin{equation}\label{haop}
Sf(t)=\frac1t\int_0^tf(r)dr.
\end{equation}
A simple calculation shows that, for any $s,t>0$, then
\begin{equation*}
  S\chi_{[0,t]} (s) = \min\Big\{1,\frac{t}{s}\Big\}\approx\frac{1}{1+\frac{s}{t}}=E_{1/t}g(s),
\end{equation*}
where $g$ and $E_t$ are defined as in \eqref{wxeg}. This remark yields the following important fact.

\begin{lemma}\label{lemma R-S}
Let $\varphi$ be quasi-concave and $X$ an r.i.\ Banach space. The following are equivalent:
\begin{enumerate}[{\rm(}i{\rm\,)}]
\item{$\Lambda_\varphi\subset R(X)$}.
\item{$\|S\chi_{[0,t]}\|_X\lesssim\varphi(t)$}.
\item{$S:\Lambda_\varphi\rightarrow X$ is bounded}.
\end{enumerate}
\end{lemma}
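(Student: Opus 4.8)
The plan is to establish the two trivial implications by hand and to reduce the equivalence $(i)\Leftrightarrow(ii)$ to the defining property of $R(X)$. For $(i)\Leftrightarrow(ii)$ I would recall that $R(X)=\Lambda_{W_X}$, and that for minimal Lorentz spaces the inclusion $\Lambda_\varphi\subset\Lambda_\psi$ holds precisely when $\psi\lesssim\varphi$: the forward direction follows by testing on $\chi_{[0,t]}$, whose $\Lambda_\varphi$-norm equals the fundamental function $\varphi(t)$, and the converse from the integration-by-parts formula $\|f\|_{\Lambda_\varphi}=\int_0^\infty\varphi\,d(-f^*)$, which makes the norm monotone in $\varphi$. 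Taking $\psi=W_X$ thus identifies $(i)$ with the comparison $W_X(t)\lesssim\varphi(t)$. The remark preceding the lemma gives $S\chi_{[0,t]}(s)=\min\{1,t/s\}\approx E_{1/t}g(s)$ with constants independent of $t$, so that $W_X(t)=\|E_{1/t}g\|_X\approx\|S\chi_{[0,t]}\|_X$ because $X$ is r.i.; this turns $W_X\lesssim\varphi$ into exactly $(ii)$. The implication $(iii)\Rightarrow(ii)$ is then immediate, since $\|S\chi_{[0,t]}\|_X\le\|S\|\,\|\chi_{[0,t]}\|_{\Lambda_\varphi}=\|S\|\,\varphi(t)$.

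The substance of the proof is $(ii)\Rightarrow(iii)$. First I would reduce to decreasing functions: from $|Sf(t)|\le\frac1t\int_0^t f^*=f^{**}(t)$ and the fact that $f^{**}$ is decreasing, one gets $(Sf)^*\le f^{**}=Sf^*$, whence $\|Sf\|_X\le\|Sf^*\|_X$ since $X$ is an r.i.\ lattice. It therefore suffices to bound $\|Sf^*\|_X$ by $\|f\|_{\Lambda_\varphi}$. Next I would decompose the decreasing function via the layer-cake formula $f^*=\int_0^\infty\chi_{[0,\mu_f(\lambda))}\,d\lambda$, where $\mu_f$ is the distribution function of $f$. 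As $S$ is positive and linear, Tonelli gives $Sf^*=\int_0^\infty S\chi_{[0,\mu_f(\lambda))}\,d\lambda$, and the integral triangle inequality in the Banach function space $X$ together with $(ii)$ yields
$$
\|Sf^*\|_X\le\int_0^\infty\big\|S\chi_{[0,\mu_f(\lambda))}\big\|_X\,d\lambda\lesssim\int_0^\infty\varphi(\mu_f(\lambda))\,d\lambda.
$$
To close, I would use the Fubini identity $\int_0^\infty\varphi(\mu_f(\lambda))\,d\lambda=\int_0^\infty f^*(t)\,d\varphi(t)=\|f\|_{\Lambda_\varphi}$, obtained by writing $f^*(t)=\int_0^\infty\chi_{\{f^*>\lambda\}}(t)\,d\lambda$, interchanging the order of integration, and noting that $\{f^*>\lambda\}=[0,\mu_f(\lambda))$ so that the inner integral of $d\varphi$ is $\varphi(\mu_f(\lambda))$.

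The main obstacle I anticipate is the justification of the interchange of $S$ with the $\lambda$-integral and the application of the integral form of the triangle inequality; this is exactly where the Banach (rather than merely quasi-Banach) hypothesis on $X$ is used, and where one must check that all quantities are nonnegative so that Tonelli applies and no convergence problems occur. Once the rearrangement bound $(Sf)^*\le f^{**}$ and the layer-cake representation are in place, the rest is routine bookkeeping.
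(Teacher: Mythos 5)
Your proof is correct. For (i)$\Leftrightarrow$(ii) and (iii)$\Rightarrow$(ii) you follow the same route as the paper: $R(X)=\Lambda_{W_X}$, the equivalence $W_X(t)\approx\|S\chi_{[0,t]}\|_X$, and the embedding criterion $\Lambda_\varphi\subset\Lambda_\psi\Leftrightarrow\psi\lesssim\varphi$ (which the paper simply cites, while you sketch its proof). The substantive implication (ii)$\Rightarrow$(iii) is where you genuinely differ in mechanics, though not in spirit. The paper keeps $f$ itself and slices it dyadically: with $A_n=\{2^n<|f|\le2^{n+1}\}$ it bounds $\|S(f\chi_{A_n})\|_X\le 2^{n+1}\|S\chi_{A_n}\|_X\le C\,2^{n+1}\varphi(\lambda_f(2^n))$, using the pointwise bound $S\chi_{A_n}\le S\chi_{[0,|A_n|]}$ to avoid ever rearranging $f$, and then sums with the countable triangle inequality against $\|f\|_{\Lambda_\varphi}=\int_0^\infty\varphi(\lambda_f(s))\,ds$, at the cost of a factor $4$. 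You instead first symmetrize via $(Sf)^*\le f^{**}=Sf^*$, then use the exact continuous layer-cake representation $f^*=\int_0^\infty\chi_{[0,\lambda_f(u))}\,du$ together with Minkowski's integral inequality in $X$, and close with the same Fubini identity. Both arguments rest on the same idea, namely reducing to characteristic functions through a layer decomposition of $f$ and recombining via the distribution-function formula for the $\Lambda_\varphi$ norm. What your continuous version buys is the sharp constant (the constant of (ii) itself, with no dyadic loss); what it costs is the appeal to the integral form of Minkowski's inequality in a Banach function space, which is standard (it follows from the Lorentz--Luxemburg duality $\|h\|_X=\sup\{\int hg:\|g\|_{X'}\le1,\ g\ge0\}$ plus Tonelli, or from the Fatou property) but heavier than the plain triangle inequality the paper uses. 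Note that the paper's infinite sum also implicitly invokes the Fatou property, and both proofs use the Banach (rather than quasi-Banach) hypothesis exactly where you say it enters, so your closing remark is accurate.
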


\begin{proof}
By definition, $R(X)$ is the Lorentz space $\Lambda_{W_X}$, where $W_X(t)=\|E_{1/t}g\|_X\approx\|S\chi_{[0,t]}\|_X$. Therefore, $\Lambda_\varphi\subset \Lambda_{W_X}$ is equivalent to $\|S\chi_{[0,t]}\|_X\lesssim\varphi(t)$ \cite{CPSS}. This shows the equivalence of $(i)$ and $(ii)$.

The implication $(iii)\Rightarrow (ii)$ is immediate. Let us now see that $(ii)$ implies $(iii)$. First, notice that for a measurable set $A$, with measure $|A|$, we have that
$$
\|S\chi_A\|_X\leq\|S\chi_{[0,|A|]}\|_X\leq C\|\chi_A\|_{\Lambda_\varphi}.
$$
Now, given $f\in\Lambda_\varphi$, denote $A_n=\big\{x:2^n<|f(x)|\leq2^{n+1}\big\}$, for $n\in\mathbb Z$. Using that
$$
\|f\|_{\Lambda_\varphi}=\int_0^\infty \varphi(\lambda_f(t))\,dt,
$$
where $\lambda_f(t)=|\{x:|f(x)|>t\}$ is the distribution function of $f$ then, it follows that

\begin{align*}
\|Sf\|_X&=\bigg\|\sum_{n\in\mathbb Z} S(f\chi_{A_n})\bigg\|_X\leq C\sum_{n\in\mathbb Z} 2^{n+1}\|\chi_{A_n}\|_{\Lambda_\varphi}\\
&\le C\sum_{n\in\mathbb Z} 2^{n+1}\varphi(\lambda_f(2^n))\leq 4C\|f\|_{\Lambda_\varphi}.
\end{align*}
\end{proof}

The equivalence of conditions $(i)$ and $(iii)$ suggests that, in order to obtain an equality in $(i)$, we should consider the optimal range for the Hardy operator \eqref{haop}  on $\Lambda_\varphi$.

\begin{definition}
Given a quasi-concave function $\varphi$, let $\mathfrak{R}[S,\Lambda_\varphi]$ denote the minimal r.i.\ Banach function space $Y$ such that $S:\Lambda_\varphi\rightarrow Y$ is bounded.
\end{definition}

Note that the minimal space $\mathfrak{R}[S,\Lambda_\varphi]$ may not exist in general. However, we will see in Theorem \ref{mrri} that the existence of this space is equivalent to condition \eqref{filo}.

As far as we know, the problem of determining the optimal space $Y$ (among r.i.\ spaces) such that $S:\Lambda_\varphi\rightarrow Y$ is bounded has not been studied before. An easy duality  argument (using that $\Lambda_\varphi'=M_{\varphi_a}$) relates this problem with that of finding the optimal r.i.\ space $X$ such that $S':X\rightarrow M_{\varphi_a}$ is bounded, where $\varphi_a(t)=t/\varphi(t)$ and  $S'$ is  the conjugate Hardy operator:
$$
S'f(t)=\int_t^\infty\frac{f(s)}{s}ds.
$$

If the minimality condition we consider here is relaxed, and one looks for an optimal domain or range space among the class of all Banach lattices (or Banach function spaces), then vector measure techniques are used to characterize these cases  (see \cite{Okada-Ricker-Sanchez} and the references therein). However, note that in fact, as pointed out in \cite{Delgado-Soria}, the optimal domain for the Hardy operator is never an r.i.\ space. Similar questions, related to optimal Sobolev embeddings for r.i.\ spaces, were also considered in \cite{Edmunds-Kerman-Pick}.

In \cite{Nekvinda-Pick1} and \cite{Nekvinda-Pick2} this kind of optimal range (respectively domain) problems within the class of Banach lattices were studied for the Hardy operator and  $L^p$ spaces.

We now characterize the existence of $\mathfrak{R}[S,\Lambda_\varphi]$ and show an explicit description of its norm:

\begin{theorem}\label{mrri}
Let $\varphi$ be a quasi-concave function. Then,  $\varphi$ satisfies \eqref{filo} if and only if the space $\mathfrak{R}[S,\Lambda_\varphi]$ exists. In this case, $\mathfrak{R}[S,\Lambda_\varphi]$ coincides with the space:
$$
X=\Big\{f\in L^1+L^\infty:f^{**}\leq (Sg)^{**},\textrm{ for some decreasing }g\in\Lambda_\varphi\Big\},
$$
endowed with the norm
$$
\|f\|_X=\inf\Big\{\|g\|_{\Lambda_\varphi}: f^{**}\leq (Sg)^{**}\Big\}.
$$
\end{theorem}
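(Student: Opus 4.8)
The equivalence splits into the two directions of \eqref{filo}, and I would dispatch necessity first, since it is immediate from the tools already in hand. Suppose $Y:=\mathfrak{R}[S,\Lambda_\varphi]$ exists. Then $S\colon\Lambda_\varphi\to Y$ is bounded, so the implication $(iii)\Rightarrow(ii)$ of Lemma~\ref{lemma R-S} gives $\|S\chi_{[0,t]}\|_Y\lesssim\varphi(t)$; on the other hand the universal lower bound of \cite{Rodriguez-Soria}, applied to the r.i.\ Banach space $Y$, yields $\|S\chi_{[0,t]}\|_Y\approx W_Y(t)\gtrsim t\log(1+1/t)$. Chaining the two inequalities produces \eqref{filo}, so the necessity direction never needs the explicit description of $X$.

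For the converse I would take the candidate $X$ of the statement and establish three facts: (a) $X$ is an r.i.\ Banach function space; (b) $S\colon\Lambda_\varphi\to X$ is bounded; (c) $X$ embeds continuously into every r.i.\ Banach space $Y$ for which $S\colon\Lambda_\varphi\to Y$ is bounded. Items (b) and (c) together say exactly that $X=\mathfrak{R}[S,\Lambda_\varphi]$, and neither uses \eqref{filo}. For (b), note that $|Sf|\le Sf^*$ pointwise and $Sf^*$ is decreasing, whence $(Sf)^{**}\le(Sf^*)^{**}$; choosing $g=f^*$ in the defining infimum gives $\|Sf\|_X\le\|f\|_{\Lambda_\varphi}$. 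For (c), the defining inequality $f^{**}\le(Sg)^{**}$ is precisely the Hardy--Littlewood--P\'olya relation $f\prec Sg$, so by the Calder\'on--Mityagin principle \cite{Bennett-Sharpley} one has $\|f\|_Y\le\|Sg\|_Y\lesssim\|g\|_{\Lambda_\varphi}$ for every admissible decreasing $g$, and passing to the infimum gives $\|f\|_Y\lesssim\|f\|_X$. That $\|\cdot\|_X$ is a norm follows from the subadditivity $(f_1+f_2)^{**}\le f_1^{**}+f_2^{**}$ together with the identity $(Sg_1)^{**}+(Sg_2)^{**}=(S(g_1+g_2))^{**}$ valid for decreasing $g_1,g_2$; monotonicity and rearrangement invariance are automatic because the norm depends only on $f^{**}$.

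The crux is (a), and this is where \eqref{filo} becomes indispensable. Since $Sg=g^{**}$ for decreasing $g$, one has $(Sg)^{**}=S^2g$, and the lattice axioms reduce to an estimate of the form $S^2g(t)\lesssim_t\|g\|_{\Lambda_\varphi}$. The model computation is
$$
S^2g(1)=\int_0^1 g^{**}(r)\,dr=\int_0^1 g(u)\log(1/u)\,du.
$$
To control the right-hand side I would first observe that, for decreasing $g$, integration by parts turns \eqref{filo} into $\|g\|_{\Lambda_\varphi}=\int_0^\infty\varphi\,d(-g)\geq C\int_0^\infty\psi\,d(-g)=C\|g\|_{\Lambda_\psi}$, where $\psi(t)=t\log(1+1/t)$ (equivalently $\Lambda_\varphi\subset\Lambda_\psi$); since the weight $\log(1/u)$ is comparable to $d\psi$ on $(0,1)$, this yields $S^2g(1)\lesssim\|g\|_{\Lambda_\psi}\lesssim\|g\|_{\Lambda_\varphi}$. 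Passing to the infimum gives $f^{**}(1)\lesssim\|f\|_X$, which is the local-integrability axiom and forces $\|f\|_X=0\Rightarrow f=0$. Without \eqref{filo} this estimate fails and $X$ degenerates, in harmony with the necessity direction.

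It then remains to verify the Fatou property, for which I would argue by compactness: given $0\le f_n\uparrow f$ with $\sup_n\|f_n\|_X<\infty$, select near-optimal decreasing $g_n$, bounded in $\Lambda_\varphi$, use Helly's selection theorem to extract a decreasing pointwise limit $g$ with $\|g\|_{\Lambda_\varphi}\le\liminf_n\|g_n\|_{\Lambda_\varphi}$ (Fatou for $\Lambda_\varphi$), and pass to the limit in $f_n^{**}\le(Sg_n)^{**}$ to obtain $f^{**}\le(Sg)^{**}$; completeness of $X$ is then automatic in the Bennett--Sharpley framework. I expect the two genuinely delicate points to be the endpoint comparison of $\log(1/u)$ with $d\psi$ used above and the justification of the limit passage in the Fatou argument; everything else is bookkeeping.
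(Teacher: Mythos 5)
Most of your outline matches the paper's actual proof: the necessity direction (yours via the lower bound $W_Y(t)\gtrsim t\log(1+1/t)$ from \cite{Rodriguez-Soria}, the paper's via $Y\subset L^1+L^\infty$ and Lemma~\ref{lemma R-S}) is correct either way; the triangle inequality via $(Sg_1)^{**}+(Sg_2)^{**}=(S(g_1+g_2))^{**}$, the boundedness $\|Sf\|_X\le\|f\|_{\Lambda_\varphi}$ by taking $g=f^*$, and the minimality step via the Calder\'on--Mityagin principle \cite[\S 2 Theorem 4.6]{Bennett-Sharpley} are exactly the paper's arguments. Your non-degeneracy computation $f^{**}(1)\le\int_0^1 g(u)\log(1/u)\,du\lesssim\|g\|_{\Lambda_\varphi}$ is a hands-on version of the paper's invocation of Lemma~\ref{lemma R-S} to get $S:\Lambda_\varphi\to L^1+L^\infty$ bounded (note $\|f\|_{L^1+L^\infty}=f^{**}(1)$); it is correct, except that $\log(1/u)$ is \emph{dominated by}, not \emph{comparable to}, the density of $d\psi$ on $(0,1)$ (comparability fails near $u=1$), which is harmless since only that direction is used.

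The genuine gap is in your completeness argument. You propose to verify the Fatou property by Helly selection and then "pass to the limit in $f_n^{**}\le (Sg_n)^{**}$", but pointwise convergence $g_{n_k}\to g$ of decreasing functions gives, via Fatou's lemma, only $\liminf_k Sg_{n_k}(t)\ge Sg(t)$, hence $\liminf_k (Sg_{n_k})^{**}\ge (Sg)^{**}$ --- the \emph{wrong} direction. To conclude $f^{**}\le (Sg)^{**}$ you need an upper bound $\liminf_k (Sg_{n_k})^{**}\le (Sg)^{**}$, i.e.\ you must rule out mass of the $g_{n_k}$ escaping to the origin, where it would survive in $Sg_{n_k}$ but vanish from the pointwise limit $g$. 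This can be repaired (condition \eqref{filo} yields the uniform estimate $\int_0^\delta g_n\lesssim \sup_n\|g_n\|_{\Lambda_\varphi}/\log(1/\delta)$, and a splitting argument then upgrades pointwise convergence of $g_{n_k}$ to pointwise convergence of $Sg_{n_k}$ and of the second averages), but none of this is in your sketch, and it is precisely the delicate point you leave open. The paper avoids the issue entirely by proving completeness through the Riesz--Fischer criterion: for an absolutely convergent series $\sum f_n$ with near-optimal decreasing $g_n$, positivity makes every limit monotone, so the finite-sum identity plus monotone convergence give $\big(\sum_{n\ge k}f_n\big)^{**}\le\big(S\big(\sum_{n\ge k}g_n\big)\big)^{**}$ with all inequalities going the right way, and the tail norms tend to $0$ because $\sum g_n$ converges in $\Lambda_\varphi$. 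If you want to keep your structure, replacing the Fatou/Helly step by this series argument (or supplying the no-concentration estimate above) is what is needed to close the proof.
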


\begin{proof}
Assume $\varphi$ satisfies \eqref{filo}. Let us start by proving that $\|\cdot\|_X$ actually defines a norm. It is trivial that $\|f\|_X=0$, when $f=0$ and that $\|\lambda f\|_X=|\lambda|\|f\|_X$.

Now, suppose that $\|f\|_X=0$. Then, there exists $g_n$ in $\Lambda_\varphi$, with $f^{**}\leq(Sg_n)^{**}$, such that $\|g_n\|_{\Lambda_\varphi}\rightarrow0$. Observe that, since $\varphi(t)\gtrsim t\log(1+1/t)\approx\Vert S\chi_{[0,t]}\Vert_{L^1+L^\infty}$ and using Lemma~\ref{lemma R-S}[(ii)$\Rightarrow$(iii)], we have that
$$
S:\Lambda_\varphi\rightarrow L^1+L^\infty
$$ is bounded, and hence $\|Sg_n\|_{L^1+L^\infty}\rightarrow0$. Since $f^{**}\leq(Sg_n)^{**}$ for every $n$, we have that $\|f\|_{L^1+L^\infty}\leq \inf\|Sg_n\|_{L^1+L^\infty}=0$, which shows that $f=0$.

Now, to prove the triangle inequality, take  $f_1, f_2\in X$. For each pair of decreasing functions $g_1,g_2$ in $\Lambda_\varphi$, such that $f_i^{**}\leq (Sg_i)^{**}$ for $i=1,2$, we have that

\begin{eqnarray*}
   (f_1+f_2)^{**}(t)&\leq & (Sg_1)^{**}(t)+(Sg_2)^{**}(t)=\frac1t\int_0^t\big((Sg_1)^*(s)+(Sg_2)^*(s)\big)\,ds \\
   &=& \frac1t\int_0^t S(g_1+g_2)(s)ds=(S(g_1+g_2))^{**}(t).
\end{eqnarray*}

Therefore,
$$
\|f_1+f_2\|_X\leq\|g_1+g_2\|_{\Lambda_\varphi}\leq\|g_1\|_{\Lambda_\varphi}+\|g_2\|_{\Lambda_\varphi},
$$
and since this holds for every $g_1,g_2\in\Lambda_\varphi$ such that $f_i^{**}\leq (Sg_i)^{**}$, we get that $\|f_1+f_2\|_X\leq \|f_1\|_X+\|f_2\|_X$.

Hence $\|\cdot\|_X$ defines a norm in $X$, which is clearly rearrangement invariant. Let us see now that with this norm $X$ is also complete.

Suppose $f_n$ is a sequence in $X$ with $\sum_{n=1}^\infty\|f_n\|_X<\infty$, then we want to prove that $\sum_{n=1}^\infty f_n$ converges in $X$. Splitting the sum into its positive and negative parts we can assume that $f_n$ are all positive functions.

By hypothesis, for each $n$ let $g_n$ be a decreasing function in $\Lambda_\varphi$ with
$$
f_n^{**}\leq (S g_n)^{**} \,\,\,\,\,\, \mathrm{ and }\,\,\,\,\,\,\|g_n\|_{\Lambda_\varphi}\leq\|f_n\|+2^{-n}.
$$
In particular,
$$
\sum_{n=1}^\infty \|g_n\|_{\Lambda_\varphi}<\infty,
$$
and since $\Lambda_\varphi$ is complete, then the series $\sum_{n=1}^\infty g_n$ converges in $\Lambda_\varphi$.

We claim that, for each $k>0$,
$$
\Big(\sum_{n=k}^\infty f_n\Big)^{**}\leq\Big(S\Big(\sum_{n=k}^\infty g_n\Big)\Big)^{**}.
$$
Indeed, first note that, by Lemma~\ref{lemma R-S},  $S:\Lambda_\varphi\rightarrow L^1+L^\infty$ is bounded and hence, using  \cite[\S 2 Theorem 4.6]{Bennett-Sharpley},
$$
\sum_{n=1}^\infty \|f_n\|_{L^1+L^\infty}\leq\|S\|\sum_{n=1}^\infty\|g_n\|_{\Lambda_\varphi}<\infty,
$$
and we conclude that $\sum_{n=1}^\infty f_n\in L^1+L^\infty$.

Now for fixed $k>0$, let
$$
h_{k,n}=\sum_{j=k}^{k+n}f_j.
$$
Clearly
$$
h_{k,n}\uparrow \sum_{j=k}^\infty f_j
$$
almost everywhere, so $h_{k,n}^{**}\uparrow (\sum_{j=k}^\infty f_j)^{**}$ point-wise (as $n\rightarrow\infty$.) On the other hand, for each $n\in \mathbb{N}$ we have
$$
\Big(\sum_{j=k}^{k+n} f_j\Big)^{**}\leq\sum_{j=k}^{k+n} (Sg_j)^{**}=\Big(S\Big(\sum_{j=k}^{k+n}g_j\Big)\Big)^{**}\leq\Big(S\Big(\sum_{j=k}^\infty g_j\Big)\Big)^{**}.
$$
Hence, taking the limit as $n\rightarrow\infty$ we have that
$$
\Big(\sum_{j=k}^\infty f_j\Big)^{**}\leq\Big(S\Big(\sum_{j=k}^\infty g_j\Big)\Big)^{**}
$$
as claimed.

Now, note that since
$$
\lim_{k\rightarrow\infty}\Big\|\sum_{n=k}^\infty g_n\Big\|_{\Lambda_\varphi}=0 \hspace{1cm}\textrm{and}\hspace{1cm}\Big(\sum_{n=k}^\infty f_n\Big)^{**}\leq\Big(S\Big(\sum_{n=k}^\infty g_n\Big)\Big)^{**},
$$
then, by the definition of the norm in $X$ we have that
$$
\lim_{k\rightarrow\infty}\Big\|\sum_{n=k}^\infty f_n\Big\|=0,
$$
or equivalently, that $\sum_{n=1}^\infty f_n$ converges in $X$. Therefore, $X$ is an r.i.\ Banach space.

Now, for any decreasing $f\in \Lambda_\varphi$
$$
\|Sf\|_X=\inf\{\|g\|_{\Lambda_\varphi}:(Sf)^{**}\leq(Sg)^{**}\}\leq\|f\|_{\Lambda_\varphi}.
$$
Thus, $S:\Lambda_\varphi\rightarrow X$ is bounded, which by the definition of $\mathfrak{R}[S,\Lambda_\varphi]$ means that $\mathfrak{R}[S,\Lambda_\varphi]\subset X$. For the converse inclusion, pick any r.i.\ Banach space $Y$ such that $S:\Lambda_\varphi\rightarrow Y$ is bounded. If $f\in X$ then,  for each decreasing function $g\in\Lambda_\varphi$ with $f^{**}\leq (Sg)^{**}$, we have that \cite[\S 2 Theorem 4.6]{Bennett-Sharpley}
$$
\|f\|_Y\leq\|Sg\|_Y\leq\|S\|\|g\|_{\Lambda_\varphi},
$$
which implies that $\|f\|_Y\leq\|S\|\|f\|_X$; i.e.,  $X\subset Y$. This proves the minimality condition, and hence $\mathfrak{R}[S,\Lambda_\varphi]$ coincides with $X$.

Conversely, if the minimal range space $\mathfrak{R}[S,\Lambda_\varphi]$ exists, in particular we have that
$$
S:\Lambda_\varphi\rightarrow \mathfrak{R}[S,\Lambda_\varphi]\subset L^1+L^\infty,
$$
and by Lemma~\ref{lemma R-S} we obtain that $ t\log(1+1/t) \approx\|S\chi_{[0,t]}\|_{L^1+L^\infty}\lesssim\|\chi_{[0,t]}\|_{\Lambda_\varphi}=\varphi(t)$, which gives \eqref{filo}.
\end{proof}

Note that by Lemma \ref{lemma R-S}[(iii)$\Rightarrow$(i)], it follows that, if $\varphi$ satisfies \eqref{filo},  we always have $$\Lambda_\varphi\subset R(\mathfrak{R}[S,\Lambda_\varphi]).$$ As an immediate application we get:

\begin{corollary}\label{coro optimal}
Given $\varphi$ satisfying \eqref{filo}, if there exists an r.i.\ Banach space $X$ such that $R(X)=\Lambda_\varphi$, then $R(\mathfrak{R}[S,\Lambda_\varphi])=\Lambda_\varphi$.
\end{corollary}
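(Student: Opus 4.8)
The plan is to establish the two inclusions
$$\Lambda_\varphi\subset R(\mathfrak{R}[S,\Lambda_\varphi])\quad\text{and}\quad R(\mathfrak{R}[S,\Lambda_\varphi])\subset\Lambda_\varphi$$
separately. The first one comes for free: it is precisely the observation recorded just before the statement, obtained from Lemma~\ref{lemma R-S}[(iii)$\Rightarrow$(i)] applied to the space $\mathfrak{R}[S,\Lambda_\varphi]$, and it uses only that $\varphi$ satisfies \eqref{filo} (so that $\mathfrak{R}[S,\Lambda_\varphi]$ exists, by Theorem~\ref{mrri}). Thus the entire content of the corollary is the reverse inclusion, and this is where the hypothesis $R(X)=\Lambda_\varphi$ must enter.

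For the reverse inclusion, I would first use the hypothesis to locate $\mathfrak{R}[S,\Lambda_\varphi]$ inside $X$. Since $R(X)=\Lambda_\varphi$ gives in particular $\Lambda_\varphi\subset R(X)$, Lemma~\ref{lemma R-S}[(i)$\Rightarrow$(iii)] shows that $S:\Lambda_\varphi\rightarrow X$ is bounded. But $\mathfrak{R}[S,\Lambda_\varphi]$ is, by definition and by Theorem~\ref{mrri}, the \emph{minimal} r.i.\ Banach space with exactly this boundedness property, so its minimality forces the continuous inclusion $\mathfrak{R}[S,\Lambda_\varphi]\subset X$.

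It then remains to transport this inclusion through the functor $R$, for which I would record the monotonicity of $R$. If $Z\subset Y$ with $\|h\|_Y\lesssim\|h\|_Z$, then testing on $\chi_{[0,t]}$ gives, via \eqref{wxeg}, $W_Y(t)\approx\|S\chi_{[0,t]}\|_Y\lesssim\|S\chi_{[0,t]}\|_Z\approx W_Z(t)$; since a pointwise larger fundamental function defines a smaller minimal Lorentz space, this yields $R(Z)=\Lambda_{W_Z}\subset\Lambda_{W_Y}=R(Y)$. Applying this with $Z=\mathfrak{R}[S,\Lambda_\varphi]$ and $Y=X$, and invoking the hypothesis once more, we obtain
$$R(\mathfrak{R}[S,\Lambda_\varphi])\subset R(X)=\Lambda_\varphi,$$
and combining with the first inclusion closes the proof.

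The argument is short because Lemma~\ref{lemma R-S} and Theorem~\ref{mrri} carry the weight; the only genuinely new ingredient is the monotonicity of $R$, and the delicate point there is to keep the direction of the inequalities straight—passing to a smaller space enlarges $W$, which in turn shrinks $\Lambda_W$, so $R$ is in fact order-preserving on r.i.\ spaces. I expect this bookkeeping, rather than any hard estimate, to be the main thing to get right.
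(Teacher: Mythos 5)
Your proof is correct and follows essentially the same route as the paper: Lemma~\ref{lemma R-S}[(i)$\Rightarrow$(iii)] gives boundedness of $S:\Lambda_\varphi\to X$, minimality of $\mathfrak{R}[S,\Lambda_\varphi]$ gives $\mathfrak{R}[S,\Lambda_\varphi]\subset X$, and monotonicity of $R$ combined with the standing inclusion $\Lambda_\varphi\subset R(\mathfrak{R}[S,\Lambda_\varphi])$ closes the chain. The only difference is that you verify the monotonicity of $R$ explicitly (with the correct direction of the inequalities), which the paper simply invokes without proof.
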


\begin{proof}
Let $X$ be such that $R(X)=\Lambda_\varphi$. By Lemma \ref{lemma R-S}[(i)$\Rightarrow$(iii)], we have that $S:\Lambda_\varphi\rightarrow X$ is bounded. Hence, by the minimality of $\mathfrak{R}[S,\Lambda_\varphi]$ we must have $\mathfrak{R}[S,\Lambda_\varphi]\subset X$. Therefore, by the monotonicity of the operation $R$, it holds that
$$
\Lambda_\varphi\subset R(\mathfrak{R}[S,\Lambda_\varphi])\subset R(X) =\Lambda_\varphi,
$$
as claimed.
\end{proof}

In particular, this shows that, provided $\varphi$ satisfies \eqref{filo}, the equation $R(X)=\Lambda_\varphi$ has a solution if and only if $X=\mathfrak{R}[S,\Lambda_\varphi]$ is a solution (though it may not be the only one).

\begin{corollary}
Given $\varphi$ satisfying \eqref{filo},  the following conditions are equivalent:
\begin{enumerate}
\item[{(i)}] There exists an r.i.\ Banach space $X$ such that $R(X)=\Lambda_\varphi$.
\item[{(ii)}] There exists a constant $C>0$ such that, for any $t>0$, if a decreasing function $g_t$ satisfies that
$$
\int_0^sg_t(u)\log\Big(\frac{s}{u}\Big)du\geq t\log\Big(1+\frac{s}{t}\Big),
$$
for every $s>0$, then
$$
\int_0^\infty g_t(u)d\varphi(u)\ge C\varphi(t).
$$
\item[{(iii)}] There is a constant $C>0$ such that, if $g$ is a decreasing function with
$$
\int_0^sg(u)\log\Big(\frac{s}{u}\Big)du\geq \log(1+s),
$$
for every $s>0$, then for every $t>0$ it also satisfies
$$
\int_0^\infty g\Big(\frac{u}{t}\Big)d\varphi(u)\ge C\varphi(t).
$$
\end{enumerate}
\end{corollary}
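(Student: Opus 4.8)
The plan is to establish the two equivalences $(i)\Leftrightarrow(ii)$ and $(ii)\Leftrightarrow(iii)$ separately. The first rests on the explicit norm formula for the optimal range furnished by Theorem~\ref{mrri} together with Corollary~\ref{coro optimal}, while the second is a pure dilation argument. Throughout, the hypothesis \eqref{filo} is what guarantees that $\mathfrak{R}[S,\Lambda_\varphi]$ exists, so that Corollary~\ref{coro optimal} applies.

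For $(i)\Leftrightarrow(ii)$, I would first invoke Corollary~\ref{coro optimal} to replace $(i)$ by the statement $R(\mathfrak{R}[S,\Lambda_\varphi])=\Lambda_\varphi$. Since the remark following Theorem~\ref{mrri} already provides the inclusion $\Lambda_\varphi\subset R(\mathfrak{R}[S,\Lambda_\varphi])$ for free, this equality reduces to the reverse inclusion $R(\mathfrak{R}[S,\Lambda_\varphi])\subset\Lambda_\varphi$. Writing $R(X)=\Lambda_{W_X}$ with $W_X(t)\approx\|S\chi_{[0,t]}\|_X$, and recalling (as used in the proof of Lemma~\ref{lemma R-S}) that an inclusion of minimal Lorentz spaces $\Lambda_{\psi_1}\subset\Lambda_{\psi_2}$ is equivalent to $\psi_2\lesssim\psi_1$, this reverse inclusion becomes the pointwise estimate $\varphi(t)\lesssim W_{\mathfrak{R}[S,\Lambda_\varphi]}(t)$ for all $t>0$.

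It then remains to compute $W_{\mathfrak{R}[S,\Lambda_\varphi]}(t)=\|S\chi_{[0,t]}\|_{\mathfrak{R}[S,\Lambda_\varphi]}$ from Theorem~\ref{mrri}, namely $\inf\{\|g\|_{\Lambda_\varphi}:(S\chi_{[0,t]})^{**}\le(Sg)^{**}\}$ over decreasing $g\in\Lambda_\varphi$. For such $g$ the function $Sg$ is itself decreasing, so $(Sg)^{**}(s)=\frac1s\int_0^s Sg$, and Fubini gives $\int_0^s Sg(r)\,dr=\int_0^s g(u)\log(s/u)\,du$; likewise $\int_0^s S\chi_{[0,t]}(r)\,dr\approx t\log(1+s/t)$, with absolute constants. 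Hence the constraint $(S\chi_{[0,t]})^{**}\le(Sg)^{**}$ is, up to those constants, the inequality $\int_0^s g(u)\log(s/u)\,du\ge t\log(1+s/t)$ for every $s>0$, while $\|g\|_{\Lambda_\varphi}=\int_0^\infty g\,d\varphi$. Because both the feasible set and the objective are positively homogeneous in $g$, these absolute constants can be absorbed, yielding $W_{\mathfrak{R}[S,\Lambda_\varphi]}(t)\approx\inf\int_0^\infty g\,d\varphi$ over exactly the functions appearing in $(ii)$. Since an infimum is bounded below by $C\varphi(t)$ precisely when the bound holds for every feasible $g$, the estimate $\varphi(t)\lesssim W_{\mathfrak{R}[S,\Lambda_\varphi]}(t)$ is equivalent to $(ii)$.

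Finally, for $(ii)\Leftrightarrow(iii)$ I would use the dilation $g_t(u)=g(u/t)$. The substitution $u=tv$ turns $\int_0^s g_t(u)\log(s/u)\,du$ into $t\int_0^{s/t}g(v)\log((s/t)/v)\,dv$, so $g_t$ satisfies the scale-$t$ constraint of $(ii)$ if and only if $g$ satisfies the normalized constraint $\int_0^\sigma g(v)\log(\sigma/v)\,dv\ge\log(1+\sigma)$ of $(iii)$; moreover $\int_0^\infty g_t\,d\varphi=\int_0^\infty g(u/t)\,d\varphi$ is exactly the quantity compared with $C\varphi(t)$ in both statements. As $g\mapsto g(\cdot/t)$ is, for each fixed $t$, a bijection between the two feasible families carrying one conclusion onto the other, $(ii)$ and $(iii)$ are equivalent with the same constant $C$. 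The only genuinely delicate point in the whole argument is the identification carried out in the previous paragraph: verifying that the abstract comparison $(S\chi_{[0,t]})^{**}\le(Sg)^{**}$ coincides, uniformly in $t$, with the explicit log-weighted inequality, and checking that the various absolute constants (the $\approx$ in $W_X$ and the two-sided bound $\int_0^s S\chi_{[0,t]}\approx t\log(1+s/t)$) can be pushed into $C$ without affecting the equivalence; the dilation step is then entirely routine.
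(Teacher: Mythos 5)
Your proposal is correct and follows essentially the same route as the paper: reduce (i) to $R(\mathfrak{R}[S,\Lambda_\varphi])=\Lambda_\varphi$ via Corollary~\ref{coro optimal}, use the automatic inclusion $\Lambda_\varphi\subset R(\mathfrak{R}[S,\Lambda_\varphi])$ to reduce to the estimate $\varphi(t)\lesssim W_{\mathfrak{R}[S,\Lambda_\varphi]}(t)$, identify the constraint $(S\chi_{[0,t]})^{**}\le (Sg)^{**}$ with the log-weighted inequality, and obtain (iii) by commuting the dilation with $S$. The only difference is that you spell out the Fubini computation and the absorption of absolute constants that the paper dismisses as ``a straightforward computation,'' which is a welcome but not substantively different addition.
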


\begin{proof}
Using Corollary~\ref{coro optimal}, we know that $R(X)=\Lambda_\varphi$ has a solution if and only if $\Lambda_\varphi=R(\mathfrak{R}[S,\Lambda_\varphi])$. Moreover, we always have
$$
\Lambda_\varphi\subset R(\mathfrak{R}[S,\Lambda_\varphi]).
$$

The converse embedding is equivalent to
$$
\varphi(t)\lesssim W_{\mathfrak{R}[S,\Lambda_\varphi]}(t)=\inf\Big\{\|g\|_{\Lambda_\varphi}:(S\chi_{(0,t)})^{**}\leq(Sg)^{**}\,\textrm{with }g\downarrow\Big\}.
$$

But, a straightforward computation shows that a decreasing function $g_t$ satisfies $(S\chi_{(0,t)})^{**}\leq(Sg_t)^{**}$ if and only if
$$
\int_0^sg_t(u)\log\Big(\frac{s}{u}\Big)du\geq t\log\Big(1+\frac{s}{t}\Big).
$$

This shows the equivalence of the first two statements. The equivalence with the third one follows directly from the fact that the dilation operator $E_{t}$ commutes with the Hardy operator: $SE_t(g)=E_tS(g)$.
\end{proof}

\begin{remark}\label{laor}{\rm
It is easy to see that, under condition \eqref{filo}, we always have that $\Lambda_\varphi\subset \mathfrak{R}[S,\Lambda_\varphi]$. In fact, if $f\in \Lambda_\varphi$, then taking $g=f^*\in \Lambda_\varphi$ we get that $f^{**}=S(g)\le (Sg)^{**}$. Moreover, we can prove the following  characterization for the case of equality, in terms of the upper Boyd index $\overline{\alpha}_{\Lambda_\varphi}$ \cite[\S 3 Definition 5.12]{Bennett-Sharpley}.
}\end{remark}

\begin{corollary}\label{Rango=Lorentz}
Given $\varphi$ satisfying \eqref{filo}, we have that
$$
\Lambda_\varphi=\mathfrak{R}[S,\Lambda_\varphi]
$$
if and only if $\overline{\alpha}_{\Lambda_\varphi}<1$. If this holds true, then, in fact,  $\Lambda_\varphi=R(\mathfrak{R}[S,\Lambda_\varphi])$.
\end{corollary}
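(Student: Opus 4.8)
The plan is to read off the equivalence from Boyd's theorem on the Hardy operator, combined with the minimality built into the definition of $\mathfrak{R}[S,\Lambda_\varphi]$. Recall Boyd's theorem (see \cite[\S 3]{Bennett-Sharpley}): for any r.i.\ Banach space $Y$, the Hardy operator $S$ maps $Y$ boundedly into itself if and only if $\overline{\alpha}_Y<1$. Since $\varphi$ satisfies \eqref{filo}, Theorem \ref{mrri} guarantees that $\mathfrak{R}[S,\Lambda_\varphi]$ exists, and Remark \ref{laor} supplies the inclusion $\Lambda_\varphi\subset\mathfrak{R}[S,\Lambda_\varphi]$, which is always at our disposal. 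These two ingredients are what I would combine.

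For the forward direction, I would assume $\overline{\alpha}_{\Lambda_\varphi}<1$. Boyd's theorem then makes $S:\Lambda_\varphi\to\Lambda_\varphi$ bounded, so $\Lambda_\varphi$ is itself an admissible range space for $S$ on $\Lambda_\varphi$; by the minimality in the definition of $\mathfrak{R}[S,\Lambda_\varphi]$ this gives $\mathfrak{R}[S,\Lambda_\varphi]\subset\Lambda_\varphi$. Together with the reverse inclusion from Remark \ref{laor}, I obtain $\mathfrak{R}[S,\Lambda_\varphi]=\Lambda_\varphi$. Conversely, if $\mathfrak{R}[S,\Lambda_\varphi]=\Lambda_\varphi$, then by the very definition of the optimal range $S:\Lambda_\varphi\to\mathfrak{R}[S,\Lambda_\varphi]=\Lambda_\varphi$ is bounded, so Boyd's theorem forces $\overline{\alpha}_{\Lambda_\varphi}<1$. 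This closes the stated equivalence.

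For the final assertion, I would assume $\overline{\alpha}_{\Lambda_\varphi}<1$, so that $\mathfrak{R}[S,\Lambda_\varphi]=\Lambda_\varphi$ by the part just proved, and then close $R(\mathfrak{R}[S,\Lambda_\varphi])=R(\Lambda_\varphi)$ between two inclusions. The observation recorded immediately after Theorem \ref{mrri} already gives $\Lambda_\varphi\subset R(\mathfrak{R}[S,\Lambda_\varphi])$. For the reverse, I would use the general chain $R(X)\subset\Lambda(X)$ from the Introduction with $X=\Lambda_\varphi$ (legitimate, since $g=1/(1+\cdot)\approx S\chi_{[0,1]}$ lies in $\mathfrak{R}[S,\Lambda_\varphi]=\Lambda_\varphi$, so the defining hypothesis is met) together with $\Lambda(\Lambda_\varphi)=\Lambda_{\varphi_{\Lambda_\varphi}}=\Lambda_\varphi$, the fundamental function of $\Lambda_\varphi$ being $\varphi$. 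Hence $\Lambda_\varphi\subset R(\mathfrak{R}[S,\Lambda_\varphi])\subset\Lambda(\Lambda_\varphi)=\Lambda_\varphi$, giving equality.

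The only genuinely delicate point is the precise invocation of Boyd's theorem, namely identifying boundedness of $S$ on the specific space $\Lambda_\varphi$ with the strict inequality $\overline{\alpha}_{\Lambda_\varphi}<1$; once that is secured, both the equivalence and the supplementary identity are formal consequences of minimality and the standard embeddings. As an alternative route for the last identity, one could instead note $\overline{\beta}_{\Lambda_\varphi}\le\overline{\alpha}_{\Lambda_\varphi}<1$ (the fundamental index is dominated by the Boyd index) and apply \cite[Theorem 2.2]{Rodriguez-Soria} to get $R(\Lambda_\varphi)=\Lambda(\Lambda_\varphi)=\Lambda_\varphi$ directly.
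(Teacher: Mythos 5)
Your proof is correct and follows essentially the same route as the paper: the equivalence is obtained by combining the inclusion $\Lambda_\varphi\subset\mathfrak{R}[S,\Lambda_\varphi]$ of Remark~\ref{laor} with the minimality of the optimal range (Theorem~\ref{mrri}) and Boyd's theorem \cite[\S 3 Theorem~5.17]{Bennett-Sharpley}, which is exactly the paper's argument. The only difference is that you spell out the final assertion $\Lambda_\varphi=R(\mathfrak{R}[S,\Lambda_\varphi])$, which the paper's proof leaves implicit; both of your routes for it (via $R(\Lambda_\varphi)\subset\Lambda(\Lambda_\varphi)=\Lambda_\varphi$, after noting $g\approx S\chi_{[0,1]}\in\Lambda_\varphi$, or via $\overline{\beta}_\varphi\le\overline{\alpha}_{\Lambda_\varphi}<1$ together with \cite[Theorem 2.2]{Rodriguez-Soria}) are valid.
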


\begin{proof}
By Remark~\ref{laor}, $\Lambda_\varphi\subset \mathfrak{R}[S,\Lambda_\varphi]$, and hence equality holds if and only if $\mathfrak{R}[S,\Lambda_\varphi]\subset\Lambda_\varphi$ which, by Theorem~\ref{mrri}, is equivalent to the boundedness $S:\Lambda_\varphi\rightarrow \Lambda_\varphi$. Finally, this condition is known to be equivalent to the inequality  $\overline{\alpha}_{\Lambda_\varphi}<1$ \cite[\S 3 Theorem~5.17]{Bennett-Sharpley}.
\end{proof}

Notice that, in general, the relation between the upper fundamental index and the upper Boyd index, for an r.i.\ space $X$ with fundamental function $\varphi_X$, is given by the inequality  $\overline{\beta}_{\varphi_X}\leq\overline{\alpha}_X$ \cite[pp. 177-178]{Bennett-Sharpley}.
When $X$ is a Lorentz space, it is easy to show that, in fact,  the equality always holds (examples of non-Lorentz r.i.\ spaces with strict inequality are known \cite{Shimogaki}). This result agrees with the following remark: If $X=\Lambda_\varphi$, we know that $\overline{\beta}_\varphi<1$ is equivalent to $R(\Lambda_\varphi)=\Lambda_\varphi$ \cite[Theorem 2.2]{Rodriguez-Soria}, which implies that $R(\mathfrak{R}[S,\Lambda_\varphi])=\Lambda_\varphi$ (see Corollary~\ref{coro optimal}) and, by Corollary~\ref{Rango=Lorentz}, this is equivalent to $\overline{\alpha}_{\Lambda_\varphi}<1$. Hence, if $\overline{\alpha}_{\Lambda_\varphi}=1$, then $\overline{\beta}_\varphi=1$.

\begin{example}{\rm
Since the spaces $L^{p,1}$, $1<p<\infty$ and $L^\infty$  are all minimal Lorentz spaces, with upper Boyd index strictly less than 1, then $\mathfrak{R}[S,L^{p,1}]=L^{p,1}$ and $\mathfrak{R}[S,L^\infty]=L^\infty$. We know by Theorem \ref{mrri} that, if $p=1$, then $\mathfrak{R}[S,L^1]$ does not exist (see also Remark \ref{nolu}).
}\end{example}

We are now going to see a couple of examples for which the upper Boyd index is equal to 1:
\begin{example}{\rm
Let $\varphi(t)=\max\{1,t\}$. Then
$$
\mathfrak{R}[S,\Lambda_\varphi]=M_\psi,
$$
where $\psi(t)={t}/{\log(1+t)}$. In fact, first notice that $\Lambda_\varphi=L^1\cap L^\infty$. Now,  since  $S:L^1\rightarrow L^{1,\infty}$ and $S:L^\infty\rightarrow L^\infty$ are bounded, we immediately get that
$$
S:\Lambda_\varphi\rightarrow L^{1,\infty}\cap L^\infty
$$
is bounded, though $L^{1,\infty}\cap L^\infty$ is not a Banach space. But $L^{1,\infty}\cap L^\infty\subset M_\psi$ (this is in fact the smallest of all r.i.\ Banach spaces satisfying this embedding \cite[Proposition~3.3]{Rodriguez-Soria}), and hence we get that $\mathfrak{R}[S,\Lambda_\varphi]\subset M_\psi$.

For the converse inclusion, since $R(M_\psi)=\Lambda_\varphi$ \cite{Rodriguez-Soria}, using Corollary \ref{coro optimal} we get that $R(\mathfrak{R}[S,\Lambda_\varphi])=\Lambda_\varphi$. Thus, by the minimality of $M_\psi$ among those spaces with $R(X)\neq\{0\}$ \cite[Proposition 3.5]{Rodriguez-Soria}, it also holds that $M_\psi\subset \mathfrak{R}[S,\Lambda_\varphi]$.
}\end{example}

\begin{example}\label{ex210}{\rm
Let $\phi(t)=t\log(1+1/t)$. Then
$$
\mathfrak{R}[S,\Lambda_\phi]=L^1+L^\infty.
$$

In fact, since, by definition, $\mathfrak{R}[S,\Lambda_\phi]$ is an  r.i.\ Banach space,  it follows that
$$
\mathfrak{R}[S,\Lambda_\phi]\subset L^1+L^\infty.
$$
Let us prove the converse inclusion. To simplify the notation, set  $X=\mathfrak{R}[S,\Lambda_\phi]$ and let $\varphi_X$ denote its fundamental function. Since
$$
X\subset M_{\varphi_X}\subset L^1+L^\infty,
$$
by Lemma~\ref{lemma R-S} and \cite[Remark~2.7]{Rodriguez-Soria} we have that
$$
\Lambda_\phi\subset R(X)\subset R(M_{\varphi_X})\subset R(L^1+L^\infty)=\Lambda_\phi,
$$
so, in particular, we have that $R(M_{\varphi_X})=\Lambda_\phi$. Now,

\begin{eqnarray*}
\phi(t)&\gtrsim& W_{M_{\varphi_X}}(t)=\sup_{u>0}(E_{1/t}g)^{**}(u)\varphi_X(u) \\
     &=& \sup_{u>0}\frac{\varphi_X(u)}{u}\int_0^u\frac{1}{1+\frac{s}{t}}ds\\
     &=& t\sup_{u>0} \log(1+{u}/{t})\frac{\varphi_X(u)}{u}.\\
\end{eqnarray*}
Therefore, for any $u>0$,
$$
\varphi_X(u)\le\inf_{t>0}\frac{u\,\phi(t)}{t\log(1+u/t)}=\min\{1,u\}.
$$
Hence,
$$
\Lambda_{\min\{1,t\}}=L^1+L^\infty\subset\Lambda_{\varphi_X}\subset X,
$$

\noindent
which shows that $L^1+L^\infty\subset \mathfrak{R}[S,\Lambda_\phi]$ (see also  Lemma \ref{maximal phi tilde} and  Theorem~\ref{Marcinkiewicz} for a more general result).
}\end{example}

\section{The case of r.i.\ quasi-Banach spaces}

In the context of  r.i.\ quasi-Banach spaces, the equation $R(X)=\Lambda_\varphi$  has always a solution (provided that $\varphi$ is quasi-concave), as the following result shows.

\begin{theorem}
Let $\varphi$ be a quasi-concave function and let $\Lambda^{1,\infty}_\varphi$ be the  r.i.\ quasi-Banach space defined in \eqref{laui}. Then,
$R(\Lambda^{1,\infty}_\varphi)=\Lambda_\varphi.$
\end{theorem}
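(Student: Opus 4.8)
The plan is to compute directly the fundamental-type function $W_{\Lambda^{1,\infty}_\varphi}$ that defines $R(\Lambda^{1,\infty}_\varphi)=\Lambda_{W_{\Lambda^{1,\infty}_\varphi}}$ and to show that it is equivalent to $\varphi$; since the minimal Lorentz space $\Lambda_\psi$ depends only on the equivalence class of its quasi-concave weight (the two-sided inclusion criterion of \cite{CPSS}, as used in the proof of Lemma~\ref{lemma R-S}), this immediately yields the desired equality. By \eqref{wxeg} the weight is
$$
W_{\Lambda^{1,\infty}_\varphi}(t)=\Big\|\tfrac{1}{1+\frac{\cdot}{t}}\Big\|_{\Lambda^{1,\infty}_\varphi}.
$$
The key simplification is that the function $h_t(s)=\frac{1}{1+s/t}=\frac{t}{t+s}$ is already decreasing in $s$, so $h_t^*=h_t$, and hence by the definition \eqref{laui} of the weak-type norm,
$$
W_{\Lambda^{1,\infty}_\varphi}(t)=\sup_{s>0}\frac{t}{t+s}\,\varphi(s).
$$
Everything thus reduces to establishing $\sup_{s>0}\frac{t}{t+s}\varphi(s)\approx\varphi(t)$.

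For the lower bound I would simply evaluate the supremum at $s=t$, obtaining $W_{\Lambda^{1,\infty}_\varphi}(t)\ge\tfrac12\varphi(t)$. For the upper bound I would split the supremum according to whether $s\le t$ or $s>t$, invoking in turn each of the two defining properties of a quasi-concave function. On the range $s\le t$, monotonicity of $\varphi$ together with $\frac{t}{t+s}\le1$ gives $\frac{t}{t+s}\varphi(s)\le\varphi(s)\le\varphi(t)$. On the range $s>t$, the fact that $\varphi(s)/s$ is decreasing yields $\varphi(s)\le\frac{s}{t}\varphi(t)$, whence $\frac{t}{t+s}\varphi(s)\le\frac{s}{t+s}\varphi(t)\le\varphi(t)$. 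Taking the supremum over both ranges gives $W_{\Lambda^{1,\infty}_\varphi}(t)\le\varphi(t)$.

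Combining the two estimates gives $\tfrac12\varphi(t)\le W_{\Lambda^{1,\infty}_\varphi}(t)\le\varphi(t)$, so $W_{\Lambda^{1,\infty}_\varphi}\approx\varphi$, and therefore $R(\Lambda^{1,\infty}_\varphi)=\Lambda_{W_{\Lambda^{1,\infty}_\varphi}}=\Lambda_\varphi$. The argument is essentially a single elementary computation, and I do not anticipate a genuine obstacle; the only point requiring care is the transition to the regime $s>t$, where one must use the full quasi-concavity inequality $\varphi(s)/s\le\varphi(t)/t$ rather than mere monotonicity of $\varphi$. As a preliminary sanity check one should verify that $g(s)=1/(1+s)\in\Lambda^{1,\infty}_\varphi$ so that $R(\Lambda^{1,\infty}_\varphi)$ is a nontrivial space, but this is exactly the finiteness of $W_{\Lambda^{1,\infty}_\varphi}(1)$, which the upper bound already delivers.
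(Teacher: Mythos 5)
Your proof is correct and follows essentially the same route as the paper: both compute $W_{\Lambda^{1,\infty}_\varphi}(t)$ directly, using that the relevant function is decreasing so the weak-type norm becomes a supremum, and identify that supremum as $\approx\varphi(t)$. The only cosmetic difference is that the paper recognizes $\sup_s \min\{1,t/s\}\,\varphi(s)$ as $\|\chi_{[0,t]}\|_{M_\varphi}=\varphi(t)$ (the fundamental function of the Marcinkiewicz space), whereas you verify the two-sided estimate $\tfrac12\varphi(t)\le\sup_{s>0}\tfrac{t}{t+s}\varphi(s)\le\varphi(t)$ by hand from the quasi-concavity of $\varphi$.
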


\begin{proof} A simple calculation shows that
\begin{eqnarray*}
W_{\Lambda^{1,\infty}_\varphi}(t) &=& \|E_{1/t}g\|_{\Lambda^{1,\infty}_\varphi} \approx \|S\chi_{[0,t]}\|_{\Lambda^{1,\infty}_\varphi} \\
 &=& \sup_s \frac1s\int_0^s\chi_{[0,t]}(u)\,du\,\varphi(s)=\|\chi_{[0,t]}\|_{M_\varphi}=\varphi(t).
\end{eqnarray*}
Therefore,
$$
R(\Lambda^{1,\infty}_\varphi)=\Lambda_{W_{\Lambda^{1,\infty}_\varphi}}=\Lambda_\varphi.
$$
\end{proof}

It is known \cite{Soria:98} that  $\Lambda^{1,\infty}_\varphi$ is a Banach space if and only if $\varphi$ satisfies the so called  $B_1$ condition \cite{AM}:
$$
\int_t^\infty\frac{\varphi(r)}{r^2}\,dr\le C\frac{\varphi(t)}t,
$$
which is equivalent to the boundedness of $S:\Lambda_\varphi\rightarrow \Lambda_\varphi$.   This condition is also characterized in terms of the upper Boyd index of $\Lambda_\varphi$ by means of the inequality $\overline{\alpha}_{\Lambda_\varphi}<1$ \cite[\S 3 Theorem 5.17]{Bennett-Sharpley}. Since $\overline{\beta}_\varphi\leq\overline{\alpha}_{\Lambda_\varphi}$ \cite[pp.\ 177--178]{Bennett-Sharpley}, we obtain in this case that, whenever $\varphi_X\approx\varphi$, then  $R(X)=\Lambda_{\varphi}$ \cite[Theorem 2.2]{Rodriguez-Soria}.

As in Theorem~\ref{mrri}, we can also consider the optimal  r.i.\ quasi-Banach space $X$ such that the operator $S:\Lambda_\varphi\rightarrow X$ is bounded. Let us denote this space by $\mathfrak{R}_q[S,\Lambda_\varphi]$. By definition $\mathfrak{R}_q[S,\Lambda_\varphi]\subset\mathfrak{R}[S,\Lambda_\varphi]$ provided both spaces exist.

\begin{theorem}
Let $\varphi$ be a quasi-concave function. The optimal range $\mathfrak{R}_q[S,\Lambda_\varphi]$ coincides with the space
$$
Y=\Big\{f\in L^{1,\infty}+L^\infty:\,f^*\leq Sg^*\textrm{, for some }g\in\Lambda_\varphi\Big\},
$$
endowed with the quasi-norm $\|f\|_Y=\inf\{\|g\|_{\Lambda_\varphi}:f^*\leq Sg^*\}.$
\end{theorem}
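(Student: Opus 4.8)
The plan is to follow the blueprint of the proof of Theorem~\ref{mrri}, replacing the Calderón-type domination $f^{**}\le(Sg)^{**}$ by the pointwise rearrangement bound $f^*\le Sg^*$ and the ambient space $L^1+L^\infty$ by $L^{1,\infty}+L^\infty$. Concretely, I must check four things: that $\|\cdot\|_Y$ is a quasi-norm, that $Y$ is complete, that $S:\Lambda_\varphi\to Y$ is bounded, and that $Y$ is minimal among r.i.\ quasi-Banach ranges. A preliminary observation that replaces the role of \eqref{filo} in Theorem~\ref{mrri} is that here, for \emph{every} quasi-concave $\varphi$, the operator $S$ maps $\Lambda_\varphi$ into $L^{1,\infty}+L^\infty$: since $\Lambda_\varphi\subset L^1+L^\infty$ (the largest r.i.\ space, \cite[\S3]{Bennett-Sharpley}) and $S:L^1\to L^{1,\infty}$, $S:L^\infty\to L^\infty$ are the classical Hardy bounds, $S:L^1+L^\infty\to L^{1,\infty}+L^\infty$ is bounded; equivalently $\|S\chi_{[0,t]}\|_{L^{1,\infty}+L^\infty}\approx t\lesssim\varphi(t)$. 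This is precisely why no growth hypothesis on $\varphi$ is needed and why $\mathfrak{R}_q[S,\Lambda_\varphi]$ always exists.

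For the quasi-norm, homogeneity and $f=0\Rightarrow\|f\|_Y=0$ are immediate, and definiteness follows as in Theorem~\ref{mrri}: if $\|g_n\|_{\Lambda_\varphi}\to0$ with $f^*\le Sg_n^*$, the boundedness of $S$ into $L^{1,\infty}+L^\infty$ and the ideal property force $\|f\|_{L^{1,\infty}+L^\infty}=0$. The quasi-triangle inequality is obtained from the subadditivity $(f_1+f_2)^*(t)\le f_1^*(t/2)+f_2^*(t/2)$ together with the identity $S[g(\cdot/2)](t)=Sg(t/2)$: if $f_i^*\le Sg_i^*$ then $(f_1+f_2)^*\le Sh$ with $h(s)=g_1^*(s/2)+g_2^*(s/2)$ decreasing, and since the dilation $g\mapsto g(\cdot/a)$ has norm $\le a$ on $\Lambda_\varphi$ for $a\ge1$ \cite[\S3]{Bennett-Sharpley}, one gets $\|h\|_{\Lambda_\varphi}\le2(\|g_1\|_{\Lambda_\varphi}+\|g_2\|_{\Lambda_\varphi})$; taking infima yields $\|f_1+f_2\|_Y\le2(\|f_1\|_Y+\|f_2\|_Y)$. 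The boundedness $S:\Lambda_\varphi\to Y$ is the easiest point: for $f\in\Lambda_\varphi$ take $g=f$; since $|Sf|\le f^{**}=Sf^*$ with $Sf^*$ decreasing, $(Sf)^*\le Sf^*=Sg^*$, whence $\|Sf\|_Y\le\|f\|_{\Lambda_\varphi}$. Minimality is equally direct: if $Z$ is any r.i.\ quasi-Banach space with $S:\Lambda_\varphi\to Z$ bounded and $f^*\le Sg^*$, then, $Sg^*$ being decreasing, the ideal property gives $\|f\|_Z=\|f^*\|_Z\le\|Sg^*\|_Z\le\|S\|\|g\|_{\Lambda_\varphi}$, and the infimum yields $\|f\|_Z\le\|S\|\|f\|_Y$, i.e.\ $Y\subset Z$.

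The main obstacle is completeness, and it is genuinely harder than in Theorem~\ref{mrri} because $(\cdot)^*$ is not subadditive and the quasi-triangle inequality introduces dilation factors that grow. I would first reduce to positive summands (replacing $f_n$ by $f_n^\pm$, admissible since $|f_n^\pm|\le|f_n|$ preserves membership in $Y$), so that partial sums increase pointwise and their rearrangements converge monotonically. To absorb the dilations I would pass to a geometrically convergent subsequence, i.e.\ assume $\|f_n\|_Y\le4^{-n}$ and pick decreasing $g_n\in\Lambda_\varphi$ with $f_n^*\le Sg_n$ and $\|g_n\|_{\Lambda_\varphi}\le2\cdot4^{-n}$. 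The key estimate is the iterated subadditivity $(\sum_k v_k)^*(\sum_k s_k)\le\sum_k v_k^*(s_k)$ with $s_k=t2^{-k}$, which, combined with $Sg_k(t2^{-k})=S[g_k(\cdot/2^{k})](t)$, yields for the tail
$$
\Big(\sum_{k\ge K}f_k\Big)^*\le S\Big(\sum_{j\ge1}g_{K+j-1}(\cdot/2^{j})\Big)=:SG_K,
$$
where $G_K$ is decreasing and, by the dilation bound $\|g(\cdot/a)\|_{\Lambda_\varphi}\le a\|g\|_{\Lambda_\varphi}$,
$$
\|G_K\|_{\Lambda_\varphi}\le\sum_{j\ge1}2^{j}\|g_{K+j-1}\|_{\Lambda_\varphi}\le\sum_{j\ge1}2^{j}\cdot2\cdot4^{-(K+j-1)}\lesssim4^{-K}.
$$
Hence $\big\|\sum_{k\ge K}f_k\big\|_Y\le\|G_K\|_{\Lambda_\varphi}\to0$, which shows the series converges in $Y$ and, by the standard subsequence argument, that $Y$ is complete. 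The two points requiring care are the monotone passage to the limit in the tail estimate (legitimate after the reduction to positive functions, using $h_m\uparrow h\Rightarrow h_m^*\uparrow h^*$) and the bookkeeping of the dilation constants, which is exactly what forces the geometric rate $4^{-n}$ rather than mere absolute summability. Assembling the four steps identifies $Y$ with $\mathfrak{R}_q[S,\Lambda_\varphi]$.
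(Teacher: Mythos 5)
Your proposal is correct, and its three routine steps (the quasi-norm verification, the boundedness of $S:\Lambda_\varphi\to Y$ via $(Sf)^*\le Sf^*$, and minimality via the ideal property) coincide with the paper's. Where you genuinely diverge is completeness, which is the heart of the matter. The paper invokes the Aoki--Rolewicz theorem to replace $\|\cdot\|_Y$ by an equivalent $p$-norm, reduces completeness to convergence of $p$-absolutely summable series, and controls rearrangements of sums by the Carro--Mart\'{\i}n estimate $\big(\sum_k f_k\big)^*(3t)\le\sum_k\big(f_k^*(t)+\frac1t\int_{c_kt}^tf_k^*(s)\,ds\big)$ with weights $c_k$ proportional to $\|g_k\|^p_{\Lambda_\varphi}$; the resulting logarithmic factors $1-\log(c_k)$ are absorbed through the entropy bound $x\log(C/x)\lesssim x^p$. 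You instead reduce, via the standard subsequence criterion for quasi-normed spaces, to series with geometric decay $\|f_n\|_Y\le 4^{-n}$, and control the tails by iterated rearrangement subadditivity $\big(\sum_k v_k\big)^*\big(\sum_k s_k\big)\le\sum_k v_k^*(s_k)$ with the dyadic split $s_k=t2^{-k}$, converting each term via $f_k^*(t2^{-k})\le Sg_k(t2^{-k})=S[g_k(\cdot/2^k)](t)$ into a single decreasing majorant $G_K$ with $\|G_K\|_{\Lambda_\varphi}\lesssim\sum_{j\ge1}2^j\|g_{K+j-1}\|_{\Lambda_\varphi}\lesssim 4^{-K}$. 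Both arguments are sound; yours is more elementary and self-contained (no Aoki--Rolewicz, no external rearrangement inequality), with the dilation factor $2^j$ exactly offset by the imposed rate $4^{-n}$, whereas the paper's argument applies directly to arbitrary $p$-summable series without passing to a fast subsequence.

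One inessential slip: your parenthetical claim $\|S\chi_{[0,t]}\|_{L^{1,\infty}+L^\infty}\approx t\lesssim\varphi(t)$ is wrong as stated. Since $S\chi_{[0,t]}(s)=\min\{1,t/s\}\le1$, the correct computation gives $\|S\chi_{[0,t]}\|_{L^{1,\infty}+L^\infty}\approx\min\{1,t\}$, and the inequality $t\lesssim\varphi(t)$ fails for quasi-concave functions such as $\varphi(t)=\min\{1,t\}$. This does not affect your proof: the valid route, which you also give and which the argument actually uses, is $\Lambda_\varphi\subset L^1+L^\infty$ composed with the classical bounds $S:L^1\to L^{1,\infty}$ and $S:L^\infty\to L^\infty$, and in any case $\min\{1,t\}\le\varphi(t)/\varphi(1)$ holds for every quasi-concave $\varphi$.
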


\begin{proof}
It is straightforward to check that $\|\cdot\|_Y$ defines an r.i.\ quasi-norm. Let us prove now that $Y$ is complete. To see this, first note that by Aoki-Rolewicz's theorem \cite{Kalton-Peck-Rogers} there exists $0<p<1$ such that $\|\cdot\|_Y$ is equivalent to a $p$-norm $\|\cdot\|_0$ (i.e., $\|x+y\|_0^p\leq\|x\|_0^p+\|y\|_0^p$). Now, $Y$ would be complete if we show that, for any sequence $(f_k)$ in $Y$ with $\sum_{k=1}^\infty\|f_k\|_0^p<\infty$, then the series
$$
\sum_{k=1}^\infty f_k
$$
converges in $Y$.

Thus, let $(f_k)$ in $Y$ with $\sum_{k=1}^\infty\|f_k\|_0^p<\infty$. Splitting each $f_k$ into its positive and negative parts, we can actually assume that $f_k$ is already a positive function in $Y$. Since the inclusion $Y\hookrightarrow L^{1,\infty}+L^\infty$ is continuous, we have that
$$
\Big(\sum_{k=1}^\infty\|f_k\|_{L^{1,\infty}+L^\infty}\Big)^p\le\sum_{k=1}^\infty\|f_k\|_{L^{1,\infty}+L^\infty}^p<\infty,
$$
and by completeness of $L^{1,\infty}+L^\infty$ we conclude
$$
f=\sum_{k=1}^\infty f_k\in L^{1,\infty}+L^\infty.
$$

Moreover, since $\sum_{k=1}^\infty\|f_k\|_0^p<\infty$, for each $k$ there exists $g_k\in\Lambda_\varphi$ with $f_k^*\leq S(g_k^*)$ and $\sum_{k=1}^\infty\|g_k\|^p_{\Lambda_\varphi}<\infty$. In particular, the series $\sum_{k=1}^\infty g_k$ converges in $\Lambda_\varphi$.

Let now fix $n\in \mathbb{N}$. Clearly
$$
\sum_{k=n}^{n+m} f_k\uparrow\sum_{k=n}^\infty f_k\leq f,
$$
so in particular
$$
\Big(\sum_{k=n}^\infty f_k\Big)^*\leq\liminf_{m\rightarrow\infty}\Big(\sum_{k=n}^{n+m} f_k\Big)^*.
$$

For $k\in \mathbb{N}$, let us denote
$$
c_k=\Big(\sum_{j=1}^\infty\|g_j\|_{\Lambda_\varphi}^p\Big)^{-1}\|g_k\|^p_{\Lambda_\varphi}.
$$
By \cite{Carro-Martin}, for every $m$ we have
$$
\begin{array}{ll}
 \Big(\underset{k=n}{\overset{n+m}\sum} f_k\Big)^*(3t) & \leq\underset{k=n}{\overset{n+m}\sum} \Big(f_k^*(t)+\frac1t\int_{c_kt}^tf_k^*(s)ds\Big)  \\
  &  \leq\underset{k=n}{\overset{n+m}\sum} \Big(S(g_k^*)(t)+\frac1t\int_{c_kt}^t\frac1s\int_0^sg_k^*(u)duds\Big)  \\
  &  \leq\underset{k=n}{\overset{n+m}\sum} \Big(S(g_k^*)(t)+\frac1t\int_0^t g_k^*(u)\log\big(\frac{t}{\max(c_kt,u)}\big)du\Big) \\
  &  \leq S\Big(\underset{k=n}{\overset{n+m}\sum} (1-\log(c_k))g_k^*\Big)(t)  \\
  &  \leq S\Big(\underset{k=n}{\overset{\infty}\sum} (1-\log(c_k))g_k^*\Big)(t).
\end{array}
$$
Therefore, we have
$$
\Big(\sum_{k=n}^\infty f_k\Big)^*(3t)\leq S\Big(\sum_{k=n}^\infty (1-\log(c_k))g_k^*\Big)(t),
$$
so by the definition of the norm, and taking into account that the dilation operator is bounded in $Y$,
$$
\begin{array}{ll}
\Big\|\underset{k=n}{\overset{\infty}\sum} f_k\Big\|_Y&\lesssim\bigg\|\underset{k=n}{\overset{\infty}\sum} \Big[1+\log\bigg(\frac{\sum_{j=1}^\infty\|g_j\|_{\Lambda_\varphi}^p}{\|g_k\|^p_{\Lambda_\varphi}}\bigg)\Big]g_k^*\bigg\|_{\Lambda_\varphi}\\[.6cm]
 & \lesssim \Big\|\underset{k=n}{\overset{\infty}\sum} g_k^*\Big\|_{\Lambda_\varphi}+\underset{k=n}{\overset{\infty}\sum}\|g_k\|_{\Lambda_\varphi}\log\bigg(\frac{(\sum_{j=1}^\infty\|g_j\|^p_{\Lambda_\varphi})^{1/p}}{\|g_k\|_{\Lambda_\varphi}}\bigg),\\[.5cm]
\end{array}
$$
which goes to $0$, as $n\rightarrow\infty$, since $\sum_{j=1}^\infty\|g_j\|_{\Lambda_\varphi}^p<\infty$ and $x\log(C/x)\lesssim x^p$,  if $0<x<C$ and $0<p<1$. Thus, we have seen that $Y$ is complete and hence it is an  r.i.\ quasi-Banach space.

Let us see now that the Hardy operator is bounded $S:\Lambda_\varphi\rightarrow Y$. Indeed, for any $f\in\Lambda_\varphi$:
$$
\|Sf\|_Y=\inf\{\|g\|_{\Lambda_\varphi}:(Sf)^*\leq Sg^*\}\leq\|f\|_{\Lambda_\varphi}.
$$

Now, suppose $S:\Lambda_\varphi\rightarrow X$ is bounded, and  let us see that $Y\subset X$. In fact, for $f\in {R}_q[S,\Lambda_\varphi]$, and any $g\in \Lambda_\varphi$ such that $f^*\leq Sg^*$, we have that
$$
\|f\|_X\leq\|Sg^*\|_X\leq\|S\|\|g\|_{\Lambda_\varphi}.
$$
Therefore, taking the infimum over all such $g$ we get that $\|f\|_X\leq\|S\|\|f\|_Y$.
\end{proof}

\begin{theorem}
Let $\varphi$ be quasi-concave. Then, $R(\mathfrak{R}_q[S,\Lambda_\varphi])=\Lambda_\varphi$.
\end{theorem}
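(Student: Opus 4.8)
The plan is to prove the identity by showing that the fundamental weight $W_Y$ defining $R(Y)$ is equivalent to $\varphi$, where I write $Y=\mathfrak{R}_q[S,\Lambda_\varphi]$ for brevity. By definition $R(Y)=\Lambda_{W_Y}$ with $W_Y(t)\approx\|S\chi_{[0,t]}\|_Y$, and the inclusion between two minimal Lorentz spaces is governed by the pointwise comparison of their fundamental functions (exactly as in the equivalence $(i)\Leftrightarrow(ii)$ of Lemma~\ref{lemma R-S}, whose proof uses only the Lorentz structure and hence applies verbatim to the quasi-Banach range $Y$). The whole statement therefore reduces to the two-sided estimate $W_Y(t)\approx\varphi(t)$, and I would split the argument into the inequalities $W_Y\lesssim\varphi$ and $\varphi\lesssim W_Y$, which correspond respectively to the inclusions $\Lambda_\varphi\subset R(Y)$ and $R(Y)\subset\Lambda_\varphi$.

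The inequality $W_Y\lesssim\varphi$ is the easy half and is essentially already contained in the preceding theorem. There it was shown that $S:\Lambda_\varphi\rightarrow Y$ is bounded; evaluating this boundedness on $\chi_{[0,t]}$, whose $\Lambda_\varphi$-norm equals its fundamental value $\varphi(t)$, yields
$$
W_Y(t)\approx\|S\chi_{[0,t]}\|_Y\leq\|S\|\,\|\chi_{[0,t]}\|_{\Lambda_\varphi}=\|S\|\,\varphi(t).
$$
This is just the general inclusion $\Lambda_\varphi\subset R(Y)$ already noted after Theorem~\ref{mrri} (now in the quasi-Banach setting).

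The reverse inequality $\varphi\lesssim W_Y$ is where the real work lies, and it is the step I expect to be the main obstacle. Unwinding the definition of the $Y$-quasi-norm and using that $S\chi_{[0,t]}$ is itself decreasing, I would write
$$
W_Y(t)\approx\|S\chi_{[0,t]}\|_Y=\inf\big\{\|g\|_{\Lambda_\varphi}:S\chi_{[0,t]}\leq Sg^*\big\}.
$$
The key observation is that, since $Sh=h^{**}$ for a nonnegative decreasing $h$, the constraint $S\chi_{[0,t]}\leq Sg^*$ is precisely the Hardy--Littlewood--P\'olya majorization $\chi_{[0,t]}^{**}\leq g^{**}$. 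Because $\varphi$ is quasi-concave it is comparable to its least concave majorant, so $\Lambda_\varphi$ is, up to equivalence of norms, an r.i.\ Banach function space; the Hardy--Littlewood--P\'olya principle \cite[\S 2 Theorem 4.6]{Bennett-Sharpley} then gives $\|\chi_{[0,t]}\|_{\Lambda_\varphi}\lesssim\|g\|_{\Lambda_\varphi}$ for every admissible $g$. Taking the infimum over all such $g$ produces $\varphi(t)=\|\chi_{[0,t]}\|_{\Lambda_\varphi}\lesssim W_Y(t)$, hence $R(Y)\subset\Lambda_\varphi$. Combining the two inequalities gives $W_Y\approx\varphi$, and therefore $R(Y)=\Lambda_{W_Y}=\Lambda_\varphi$. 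The only delicate point is the passage from the merely quasi-concave $\varphi$ to a genuine Banach function norm, so that the majorization principle may legitimately be invoked; this is harmless, since the associated renorming alters $\|\cdot\|_{\Lambda_\varphi}$ only by multiplicative constants, which are absorbed in the $\lesssim$.
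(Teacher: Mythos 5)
Your proof is correct and follows essentially the same route as the paper's: both reduce the statement to the two-sided estimate $W_{\mathfrak{R}_q[S,\Lambda_\varphi]}(t)\approx\varphi(t)$, obtain the upper bound by taking $\chi_{[0,t]}$ as a competitor in the defining infimum (equivalently, the boundedness of $S:\Lambda_\varphi\rightarrow\mathfrak{R}_q[S,\Lambda_\varphi]$), and obtain the lower bound by noting that any admissible $g$ satisfies $\chi_{[0,t]}^{**}\lesssim g^{**}$ and then invoking Hardy--Littlewood--P\'olya majorization in $\Lambda_\varphi$. The only cosmetic difference is that the paper cites the majorization principle for $\Lambda_\varphi$ directly (\cite[\S 3 Theorem 2.10]{Bennett-Sharpley}), whereas you justify it by replacing $\varphi$ with its least concave majorant and applying \cite[\S 2 Theorem 4.6]{Bennett-Sharpley}.
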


\begin{proof}
For any $t>0$ we have that
$$
W_{\mathfrak{R}_q[S,\Lambda_\varphi]}(t)=\|E_{1/t}g\|_{\mathfrak{R}_q[S,\Lambda_\varphi]}=\inf\Big\{\|f\|_{\Lambda_\varphi}:(E_{1/t}g)^*\leq Sf^*\Big\}\leq\|\chi_{[0,t]}\|_{\Lambda_\varphi}=\varphi(t).
$$
Similarly, if $f\in\Lambda_\varphi$ is such that $(E_{1/t}g)^*\leq S f^*$, then $\chi_{[0,t]}^{**}\leq cf^{**}$. By \cite[\S3 Theorem 2.10]{Bennett-Sharpley}, this yields
$$
\varphi(t)=\|\chi_{[0,t]}\|_{\Lambda_\varphi}\leq c\|f\|_{\Lambda_\varphi}.
$$
Hence,
$$\varphi(t)\leq c\inf\{\|f\|_{\Lambda_\varphi}:(E_{1/t}g)^*\leq Sf^*\}=cW_{\mathfrak{R}_q[S,\Lambda_\varphi]}(t).$$
\end{proof}

Notice that, in fact, we have the following embedding.
\begin{proposition}
For a quasi-concave function $\varphi$ it holds that
$$
\mathfrak{R}_q[S,\Lambda_\varphi]\subset\Lambda_\varphi^{1,\infty}.
$$
\end{proposition}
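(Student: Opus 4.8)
The plan is to reduce the stated inclusion to the elementary embedding $\Lambda_\varphi\subset M_\varphi$ already recorded in the introduction, exploiting the special structure of the defining functions of $Y=\mathfrak{R}_q[S,\Lambda_\varphi]$. Recall that $f$ belongs to $Y$ precisely when $f^*\le Sg^*$ for some $g\in\Lambda_\varphi$, and that $\|f\|_Y=\inf\{\|g\|_{\Lambda_\varphi}:f^*\le Sg^*\}$. Since the quasi-norm of $\Lambda_\varphi^{1,\infty}$ depends only on $f^*$ and is monotone in it, it will be enough to bound $\sup_{t>0}(Sg^*)(t)\varphi(t)$ by $\|g\|_{\Lambda_\varphi}$, uniformly over all admissible $g$, and then pass to the infimum.

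First I would record the elementary identity that for a nonnegative decreasing rearrangement $g^*$ one has $Sg^*(t)=\frac1t\int_0^t g^*(s)\,ds=g^{**}(t)$; in particular $Sg^*$ is itself decreasing, so it coincides with its own rearrangement and $\sup_{t>0}(Sg^*)(t)\varphi(t)=\sup_{t>0}g^{**}(t)\varphi(t)=\|g\|_{M_\varphi}$. Combining this with the pointwise estimate $f^*\le Sg^*$ gives immediately
$$
\|f\|_{\Lambda_\varphi^{1,\infty}}=\sup_{t>0}f^*(t)\varphi(t)\le\sup_{t>0}(Sg^*)(t)\varphi(t)=\|g\|_{M_\varphi}.
$$
Now the inclusion $\Lambda_\varphi\subset M_\varphi$ yields $\|g\|_{M_\varphi}\lesssim\|g\|_{\Lambda_\varphi}$, and taking the infimum over all $g$ with $f^*\le Sg^*$ produces $\|f\|_{\Lambda_\varphi^{1,\infty}}\lesssim\|f\|_Y$, which is exactly the asserted continuous embedding.

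I do not expect a genuine obstacle here: the whole argument turns on the single observation that $Sg^*=g^{**}$, which converts the supremum defining membership in $Y$ into the Marcinkiewicz norm $\|g\|_{M_\varphi}$, after which the classical chain $\Lambda_\varphi\subset M_\varphi$ finishes the job with no extra hypothesis on $\varphi$ beyond quasi-concavity. The only points meriting a word of care are the verification that $Sg^*$ is decreasing (so that its decreasing rearrangement equals $Sg^*$, legitimizing the identification with $\|g\|_{M_\varphi}$) and the fact that the bound is uniform in $g$, which is what permits the passage to the infimum. It is worth noting that the result may be read equivalently as the boundedness of $S:\Lambda_\varphi\rightarrow\Lambda_\varphi^{1,\infty}$, consistent with the full generality of the statement.
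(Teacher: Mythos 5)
Your proof is correct, but it is organized differently from the paper's. The paper never touches the explicit description of $\mathfrak{R}_q[S,\Lambda_\varphi]$: it shows that $S:M_\varphi\rightarrow\Lambda_\varphi^{1,\infty}$ is bounded, via the pointwise inequality $(Sf)^*(t)\leq S(f^*)(t)=f^{**}(t)$, restricts to $\Lambda_\varphi\subset M_\varphi$ to conclude that $S:\Lambda_\varphi\rightarrow\Lambda_\varphi^{1,\infty}$ is bounded, and then invokes the abstract minimality in the definition of $\mathfrak{R}_q[S,\Lambda_\varphi]$. You instead rely on the preceding theorem identifying $\mathfrak{R}_q[S,\Lambda_\varphi]$ with the concrete space $Y=\{f: f^*\leq Sg^*\textrm{ for some }g\in\Lambda_\varphi\}$ and estimate element by element: from $f^*\leq Sg^*=g^{**}$ you get $\|f\|_{\Lambda_\varphi^{1,\infty}}\leq\|g\|_{M_\varphi}\lesssim\|g\|_{\Lambda_\varphi}$ and pass to the infimum defining $\|f\|_Y$. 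Both arguments hinge on the same two facts, namely the identity $S(g^*)=g^{**}$, which turns the relevant supremum into $\|g\|_{M_\varphi}$, and the embedding $\Lambda_\varphi\subset M_\varphi$, so the mathematical content is essentially shared. What the paper's route buys is independence from the characterization theorem (only the definition of $\mathfrak{R}_q$ is used, and one obtains the slightly stronger fact that $S$ is bounded on all of $M_\varphi$); what your route buys is that you never need the rearrangement inequality $(Sf)^*\leq S(f^*)$ for general $f$, since membership in $Y$ hands you the pointwise bound $f^*\leq Sg^*$ for free. One cosmetic remark: the decreasingness of $Sg^*$ plays no real role in your identification $\sup_{t>0}(Sg^*)(t)\varphi(t)=\|g\|_{M_\varphi}$, because the paper defines $\|g\|_{M_\varphi}=\sup_{t>0}g^{**}(t)\varphi(t)$ and $Sg^*=g^{**}$ identically.
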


\begin{proof}
If $f\in M_\varphi$, then
$$
\|Sf\|_{\Lambda_\varphi^{1,\infty}}=\sup_{t>0} (Sf)^*(t)\varphi(t)\leq\sup_{t>0} S(f^*)(t)\varphi(t)=\|f\|_{M_\varphi}.
$$
Therefore, $S:M_\varphi\rightarrow \Lambda_\varphi^{1,\infty}$ is bounded, and in particular since $\Lambda_\varphi\subset M_\varphi$, so is $$S:\Lambda_\varphi\rightarrow \Lambda_\varphi^{1,\infty}.$$ Hence, by definition
$$
\mathfrak{R}_q[S,\Lambda_\varphi]\subset\Lambda_\varphi^{1,\infty}.
$$
\end{proof}

\begin{remark}\label{nolu}{\rm In general
$\mathfrak{R}_q[S,\Lambda_\varphi]$ and $\mathfrak{R}[S,\Lambda_\varphi]$ may be different spaces. For example if $\varphi(t)=t$, and hence $\Lambda_\varphi=L^1$, then the range space $\mathfrak{R}[S,L^1]$ does not exist by Theorem~\ref{mrri}, while
$$
\mathfrak{R}_q[S,L^1]=\Big\{f\in L^{1,\infty}: \lim_{t\to 0^+}tf^*(t)=0\Big\}.
$$

In fact,  since $S:L^1\rightarrow L^{1,\infty}$ is bounded, then $\mathfrak{R}_q[S,L^1]\subset L^{1,\infty}$. Also, if $f\in \mathfrak{R}_q[S,L^1]$ there exists $g\in L^1$ such that $tf^*(t)\le \int_0^t g^*(s)\,ds$, and hence $tf^*(t)\to 0$, as $t\to 0^+$. Thus,
$$
\mathfrak{R}_q[S,L^1]\subset\Big\{f\in L^{1,\infty}: \lim_{t\to 0^+}tf^*(t)=0\Big\}.
$$

Let us now see that
$$
\Big\{f\in L^{1,\infty}: \lim_{t\to 0^+}tf^*(t)=0\Big\}\subset \mathfrak{R}_q[S,L^1],
$$
concluding thus the proof. To this end, take $f\in L^{1,\infty}$, $\Vert f\Vert_{L^{1,\infty}}= 1$, and satisfying that $\lim_{t\to 0^+}tf^*(t)=0$. Our goal is to find $g\in L^1$ such that $f^*\le S(g^*)$. For $0<t<1$, define $h(t)=\sup_{0<s<t}(sf^*(s))$, which is an increasing, positive function and $tf^*(t)\le h(t)$, $0<t<1$. By the definition of $h$, and the hypothesis on $f$, it is easy to see that $h(0^+)=0$. Also, without loss of generality we may assume that $h$ is absolutely continuous. Now, define
$$
g(t)=\begin{cases}h'(t),
      & \text{$0<t<1$}, \\
     0, & \text{$1\le t<\infty$}.
\end{cases}
$$
It is clear that $g\in L^1$ and, for $0<t<1$,

$$
f^*(t)\le \frac{h(t)}t=\frac1t\int_0^t g(s)\,ds\le\frac1t\int_0^t g^*(s)\,ds,
$$
and, for $1\le t<\infty$,
$$
f^*(t)\le \frac 1t\le\frac{h(1)}t=\frac1t\int_0^tg(s)\,ds\le\frac1t\int_0^t g^*(s)\,ds.
$$

Therefore, $f^*(t)\le S(g^*)(t)$, for every $t>0$.
}\end{remark}

\section{Marcinkiewicz spaces}

In this section we introduce the auxiliary function $\widetilde{\varphi}$, which will allow us to find a new approach for the study of the equation $R(X)=\Lambda_\varphi$. One of the main reasons to consider this new function is the fact that it is equivalent to the fundamental function of $\mathfrak{R}[S,\Lambda_\varphi]$ (Proposition~\ref{fundamental function optimal range}).

Let $\varphi:\mathbb{R}_+\rightarrow \mathbb{R}_+\cup\{0\}$ be a quasi-concave function. Let us consider the function
\begin{equation}\label{fitil}
\widetilde{\varphi}(t)=\inf_{r>0} \frac{t\varphi(r)}{r\log(1+{t}/{r})},
\end{equation}
which clearly satisfies $\widetilde{\varphi}(t)\lesssim{\varphi}(t).$

\begin{lemma}\label{varphi-tilde} Let $\widetilde{\varphi}$ be as in \eqref{fitil}. Then,
\
\begin{enumerate}
\item[{(i)}] $\widetilde{\varphi}(t)$ is increasing.
\item[{(ii)}]   $\widetilde{\varphi}(t)/t$ is decreasing.
\item[{(iii)}]   If there is a constant $C>0$ such that, for every $t>0$, $\varphi(t)\geq Ct\log(1+1/t)$, then $\widetilde{\varphi}(t)\geq C\min\{1,t\}$. In particular, $\widetilde{\varphi}(t)\neq0$ for $t>0$.
\end{enumerate}
\end{lemma}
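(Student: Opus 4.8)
The plan is to reduce all three assertions to elementary monotonicity properties of the one-parameter family defining $\widetilde{\varphi}$, after a convenient change of variable. Writing $u=t/r$, the defining formula \eqref{fitil} becomes
$$
\widetilde{\varphi}(t)=\inf_{u>0}\frac{u\,\varphi(t/u)}{\log(1+u)},
$$
while dividing \eqref{fitil} by $t$ gives
$$
\frac{\widetilde{\varphi}(t)}{t}=\inf_{r>0}\frac{\varphi(r)}{r\log(1+t/r)}.
$$
These two representations are tailored to (i) and (ii), respectively.

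For (i) I would fix $u$ and observe that, since $\varphi$ is increasing, the map $t\mapsto u\,\varphi(t/u)/\log(1+u)$ is increasing; an infimum of increasing functions is increasing, which gives the claim. For (ii) I would fix $r$ instead and note that $t\mapsto \varphi(r)/(r\log(1+t/r))$ is decreasing, because $\log(1+t/r)$ grows with $t$; an infimum of decreasing functions is decreasing, so $\widetilde{\varphi}(t)/t$ is decreasing. Both steps are routine once the right representation is singled out.

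The substance lies in (iii). The first remark is that the assignment $\varphi\mapsto\widetilde{\varphi}$ is monotone and positively homogeneous (each individual quotient in \eqref{fitil} is), so the hypothesis $\varphi(r)\ge C\,r\log(1+1/r)=:C\psi(r)$ immediately yields $\widetilde{\varphi}\ge C\widetilde{\psi}$, reducing everything to the model function $\psi(r)=r\log(1+1/r)$. For this one,
$$
\widetilde{\psi}(t)=\inf_{r>0}\frac{t\log(1+1/r)}{\log(1+t/r)},
$$
and it suffices to show that this quotient is $\ge\min\{1,t\}$ for every $r>0$.

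I would split according to the size of $t$. When $0<t\le 1$, the inequality $\log(1+1/r)\ge\log(1+t/r)$ holds trivially, giving the quotient $\ge t=\min\{1,t\}$. When $t\ge 1$, I need $\log(1+t/r)\le t\log(1+1/r)$, which is exactly Bernoulli's inequality $(1+x)^t\ge 1+tx$ applied with $x=1/r$, so the quotient is $\ge 1=\min\{1,t\}$. Taking the infimum over $r$ yields $\widetilde{\psi}(t)\ge\min\{1,t\}$, and hence $\widetilde{\varphi}(t)\ge C\min\{1,t\}>0$ for $t>0$. The only nonobvious point is recognizing that the $t\ge1$ case is precisely Bernoulli's inequality; everything else is monotonicity bookkeeping.
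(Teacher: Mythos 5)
Your proofs of (i) and (ii) are essentially the same as the paper's: the change of variable $u=t/r$ together with the monotonicity of $\varphi$ for (i), and the monotonicity of $t\mapsto\log(1+t/r)$ for (ii). For (iii), however, you take a genuinely different, and correct, route. The paper's argument is purely structural and needs no new inequality: since the defining infimum evaluated at $t=1$ is precisely $\inf_{r>0}\varphi(r)/\bigl(r\log(1+1/r)\bigr)$, the hypothesis gives $\widetilde{\varphi}(1)\geq C$ at once, and then (i) yields $\widetilde{\varphi}(t)\geq C$ for $t\geq 1$ while (ii) yields $\widetilde{\varphi}(t)/t\geq C$ for $t\leq 1$; thus (iii) is a two-line corollary of the monotonicity just established. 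You instead exploit the monotonicity and positive homogeneity of the transform $\varphi\mapsto\widetilde{\varphi}$ to reduce to the extremal function $\psi(r)=r\log(1+1/r)$, and then verify the pointwise bound $t\log(1+1/r)\geq\min\{1,t\}\log(1+t/r)$ by cases, with Bernoulli's inequality $(1+1/r)^t\geq 1+t/r$ handling $t\geq 1$. This costs slightly more work but also buys more: it establishes the sharp lower bound $\widetilde{\psi}(t)\geq\min\{1,t\}$ for the model function itself (and letting $r\to 0$ and $r\to\infty$ in the defining infimum gives the reverse inequality, so in fact $\widetilde{\psi}(t)=\min\{1,t\}$), which is exactly the computation the paper later invokes without proof in Proposition \ref{4.10} as ``an easy calculation.'' The paper's route, conversely, makes transparent that (iii) requires nothing beyond (i), (ii) and a single evaluation at $t=1$.
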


\begin{proof}
$(i)$ Given $s<t$ it holds that
$$
\widetilde{\varphi}(s)=\inf_{u>0} \frac{u\varphi({s}/{u})}{\log(1+u)}\leq \inf_{u>0} \frac{u\varphi({t}/{u})}{\log(1+u)}=\widetilde{\varphi}(t),
$$
since $\varphi$ is increasing.

$(ii)$ This is a direct consequence of the fact that $\log(1+t)$ is increasing.

$(iii)$ By hypothesis, it holds that
$$
\widetilde{\varphi}(1)=\inf_{r>0}\frac{\varphi(r)}{r\log(1+1/r)}\geq C.
$$
Moreover, since $\widetilde{\varphi}(t)$ is increasing and ${\widetilde{\varphi}(t)}/{t}$ is decreasing, we have $\widetilde{\varphi}(t)\geq C$, for every $t\geq1$ and $\widetilde{\varphi}(t)\geq Ct$, for every $t\leq1$. Hence, for every $t>0$ we have
$$
\widetilde{\varphi}(t)\geq C\min\{1,t\}.
$$
\end{proof}

\begin{remark}{\rm
Notice that:
\begin{eqnarray*}
  \widetilde{\varphi}(0+) &=& \inf_{t>0}\widetilde{\varphi}(t) =\inf_{t>0}\inf_{r>0}\frac{t\varphi(r)}{r\log(1+t/r)}\\
   &=& \inf_{r>0}\frac{\varphi(r)}{r}\inf_{t>0}\frac{t}{\log(1+t/r)}\\
   &=&\inf_{r>0}\varphi(r)=\varphi(0+),
\end{eqnarray*}
while
\begin{eqnarray*}
 \lim_{t\to +\infty} \frac{\widetilde{\varphi}(t)}{t} &=& \inf_{t>0}\frac{\widetilde{\varphi}(t)}{t} =\inf_{t>0}\inf_{r>0}\frac{\varphi(r)}{r\log(1+t/r)}\\
   &=& \inf_{r>0}\frac{\varphi(r)}{r}\inf_{t>0}\frac{1}{\log(1+t/r)}=0.
\end{eqnarray*}
}\end{remark}

The function defined in \eqref{fitil}  will play a fundamental role in what follows, and will allow us to build the space $X=M_{\widetilde{\varphi}}$ as a candidate to solve the equation $R(X)=\Lambda_\varphi$ in the Banach case. We begin by showing the following embedding:

\begin{lemma}\label{contenido}
Given a quasi-concave function $\varphi:\mathbb{R}_+\rightarrow\mathbb{R}_+\cup\{0\}$ satisfying \eqref{filo}, let us consider the  function $\widetilde{\varphi}$ and the corresponding maximal Lorentz space $M_{\widetilde{\varphi}}$. Then,
$$
\Lambda_\varphi\subset R(M_{\widetilde{\varphi}}).
$$
\end{lemma}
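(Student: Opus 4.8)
The plan is to reduce the inclusion to the elementary criterion furnished by Lemma~\ref{lemma R-S}. Since $\widetilde{\varphi}$ is quasi-concave by Lemma~\ref{varphi-tilde} (parts (i)--(ii)) and nonzero under hypothesis~\eqref{filo} (part (iii)), the Marcinkiewicz space $M_{\widetilde{\varphi}}$ is well defined, and by the equivalence [(ii)$\Rightarrow$(i)] in Lemma~\ref{lemma R-S} it suffices to verify the single scalar estimate
$$
\|S\chi_{[0,t]}\|_{M_{\widetilde{\varphi}}}\lesssim\varphi(t),\qquad t>0.
$$

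First I would compute the left-hand side explicitly. Using $S\chi_{[0,t]}\approx E_{1/t}g$ and the fact that this function is already decreasing, the same calculation as in Example~\ref{ex210} gives
$$
(S\chi_{[0,t]})^{**}(u)\approx(E_{1/t}g)^{**}(u)=\frac{t}{u}\log\Big(1+\frac{u}{t}\Big),
$$
so that, by the definition of the Marcinkiewicz norm,
$$
\|S\chi_{[0,t]}\|_{M_{\widetilde{\varphi}}}=\sup_{u>0}(S\chi_{[0,t]})^{**}(u)\,\widetilde{\varphi}(u)\approx t\sup_{u>0}\frac{\log(1+u/t)}{u}\,\widetilde{\varphi}(u).
$$

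The decisive point is that $\widetilde{\varphi}$ has been defined precisely so as to control this supremum. Indeed, taking $r=t$ in the infimum~\eqref{fitil} yields, for every $u>0$, the pointwise bound $\widetilde{\varphi}(u)\le u\varphi(t)/(t\log(1+u/t))$. Substituting this into the expression above makes the factor $\log(1+u/t)/u$ cancel exactly, leaving
$$
t\,\frac{\log(1+u/t)}{u}\,\widetilde{\varphi}(u)\le t\cdot\frac{\varphi(t)}{t}=\varphi(t)
$$
uniformly in $u$; taking the supremum gives $\|S\chi_{[0,t]}\|_{M_{\widetilde{\varphi}}}\lesssim\varphi(t)$, and Lemma~\ref{lemma R-S} then delivers $\Lambda_\varphi\subset R(M_{\widetilde{\varphi}})$. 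There is no serious obstacle here: once the Marcinkiewicz norm of $S\chi_{[0,t]}$ is computed, the estimate is essentially a one-line consequence of the definition of $\widetilde{\varphi}$, the conceptual content being entirely packaged into the choice $r=t$ in~\eqref{fitil}. The only thing to watch is the bookkeeping of the equivalence constants arising from $S\chi_{[0,t]}\approx E_{1/t}g$, which is harmless since the conclusion is claimed only up to the relation $\lesssim$.
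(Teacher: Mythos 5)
Your proposal is correct and follows essentially the same route as the paper: the paper likewise reduces the inclusion to the bound $W_{M_{\widetilde{\varphi}}}(t)=\sup_{u>0}(E_{1/t}g)^{**}(u)\,\widetilde{\varphi}(u)\leq\varphi(t)$, obtained by the identical cancellation coming from the choice $r=t$ in the infimum \eqref{fitil}. The only cosmetic difference is that the paper works directly with $\|E_{1/t}g\|_{M_{\widetilde{\varphi}}}$ (so the inequality is exact, with no equivalence constants), whereas you route the conclusion through Lemma~\ref{lemma R-S}[(ii)$\Rightarrow$(i)] and $S\chi_{[0,t]}\approx E_{1/t}g$, which is harmless.
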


\begin{proof}
Observe that, by Lemma \ref{varphi-tilde}, $\widetilde{\varphi}$ is a quasi-concave function. We will prove that $W_{M_{\widetilde{\varphi}}}(t)\leq \varphi(t)$, for $t>0$. Indeed,
\begin{eqnarray*}
    W_{M_{\widetilde{\varphi}}}(t) &=& \Big\|\frac{1}{1+{s}/{t}}\Big\|_{M_{\widetilde{\varphi}}}=\sup_{u>0}(E_{1/t}g)^{**}(u)\widetilde{\varphi}(u) \\
     &=& \sup_{u>0}\frac1u\int_0^u\frac{1}{1+{s}/{t}}ds \inf_{r>0} \frac{u\varphi(r)}{r\log(1+{u}/{r})}\\
     &=& \sup_{u>0} t\log(1+{u}/{t})\inf_{r>0} \frac{\varphi(r)}{r\log(1+{u}/{r})}\\
     &\leq& \sup_{u>0} \frac{\varphi(t) t\log(1+{u}/{t})}{t\log(1+{u}/{t})}=\varphi(t).
  \end{eqnarray*}
\end{proof}

Moreover, the function $\widetilde{\varphi}$ has the following maximal property.

\begin{lemma}\label{maximal phi tilde}
Let $\varphi$ satisfy \eqref{filo}. If for some $\phi$ we have $\Lambda_\varphi\subset R(M_\phi)$, then $\phi\lesssim\widetilde{\varphi}$, and hence
$$
\Lambda_\varphi\subset R(M_{\widetilde\varphi})\subset R(M_\phi)
$$
In other words, $\widetilde{\varphi}$ is maximal among the set of quasi-concave functions $\phi$ satisfying
$$
\sup_u\frac{t}{u}\log\Big(1+\frac{u}{t}\Big)\phi(u)\lesssim\varphi(t).
$$
\end{lemma}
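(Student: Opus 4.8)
The plan is to translate the hypothesis into a pointwise inequality and then read off the bound $\phi\lesssim\widetilde{\varphi}$ by optimizing in an auxiliary parameter. First I would record the explicit form of the fundamental weight $W_{M_\phi}$. Since $E_{1/t}g(s)=1/(1+s/t)$ is already decreasing, a direct computation of its maximal function gives
$$
W_{M_\phi}(t)=\|E_{1/t}g\|_{M_\phi}=\sup_{u>0}(E_{1/t}g)^{**}(u)\phi(u)=\sup_{u>0}\frac{t}{u}\log\Big(1+\frac{u}{t}\Big)\phi(u),
$$
exactly as in the proof of Lemma~\ref{contenido} (with $\phi$ in place of $\widetilde{\varphi}$). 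By Lemma~\ref{lemma R-S}, the hypothesis $\Lambda_\varphi\subset R(M_\phi)=\Lambda_{W_{M_\phi}}$ is equivalent to $W_{M_\phi}(t)\lesssim\varphi(t)$, which is precisely the condition displayed in the ``in other words'' reformulation. Thus there is a constant $C>0$ with
$$
\frac{t}{u}\log\Big(1+\frac{u}{t}\Big)\phi(u)\leq C\,\varphi(t)\qquad\text{for all } t,u>0.
$$

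Next I would fix $u>0$ and treat $t$ as a free parameter. Rearranging the last inequality gives $\phi(u)\leq C\,u\varphi(t)/\big(t\log(1+u/t)\big)$ for every $t>0$, and since the constant $C$ is independent of $t$, I may take the infimum over $t$ on the right-hand side. By the very definition~\eqref{fitil} of $\widetilde{\varphi}$ this yields
$$
\phi(u)\leq C\inf_{t>0}\frac{u\varphi(t)}{t\log(1+u/t)}=C\,\widetilde{\varphi}(u),
$$
that is, $\phi\lesssim\widetilde{\varphi}$, which is the maximality claim.

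Finally I would assemble the inclusion chain. Since $\phi\lesssim\widetilde{\varphi}$ and the Marcinkiewicz norm is monotone in its fundamental function, we have $M_{\widetilde{\varphi}}\subset M_\phi$ (here I use Lemma~\ref{varphi-tilde} to know that $\widetilde{\varphi}$ is genuinely quasi-concave, so $M_{\widetilde{\varphi}}$ is a bona fide r.i.\ space and $R(M_{\widetilde{\varphi}})$ is defined). The operation $R$ is monotone with respect to inclusions, whence $R(M_{\widetilde{\varphi}})\subset R(M_\phi)$; combining this with the embedding $\Lambda_\varphi\subset R(M_{\widetilde{\varphi}})$ from Lemma~\ref{contenido} produces
$$
\Lambda_\varphi\subset R(M_{\widetilde{\varphi}})\subset R(M_\phi),
$$
as desired.

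I do not expect a genuine obstacle here: the argument is essentially the computation of $W_{M_\phi}$ followed by an optimization that reconstructs the defining infimum of $\widetilde{\varphi}$. The only points deserving care are the uniformity of the constant $C$ when passing to the infimum over $t$ (it comes from the single $\lesssim$ in Lemma~\ref{lemma R-S} and is therefore independent of both $t$ and $u$), and checking that $M_\phi$ and $M_{\widetilde{\varphi}}$ are legitimate r.i.\ Banach spaces so that $R$ and its monotonicity may be invoked.
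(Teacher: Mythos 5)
Your proposal is correct and follows essentially the same route as the paper: compute $W_{M_\phi}(t)=\sup_{u>0}\frac{t}{u}\log(1+u/t)\phi(u)$, translate the hypothesis $\Lambda_\varphi\subset R(M_\phi)$ into $W_{M_\phi}\lesssim\varphi$, rearrange, and take the infimum over $t$ to recover the defining formula \eqref{fitil} for $\widetilde{\varphi}$. The only difference is that you spell out the final chain $\Lambda_\varphi\subset R(M_{\widetilde{\varphi}})\subset R(M_\phi)$ via Lemma~\ref{contenido} and monotonicity of $R$, which the paper leaves implicit in this lemma (and records later in the proof of Theorem~\ref{Marcinkiewicz}).
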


\begin{proof}
Suppose $\Lambda_\varphi\subset R(M_\phi)$. Then, for every $t>0$ we have that
\begin{eqnarray*}
\varphi(t)&\gtrsim& W_{M_\phi}(t)=\sup_{u>0}(E_{1/t}g)^{**}(u)\phi(u) \\
     &=& \sup_{u>0}\frac{\phi(u)}{u}\int_0^u\frac{1}{1+ {s}/{t}}ds\\
     &=& \sup_{u>0} t\log\Big(1+\frac{u}{t}\Big)\frac{\phi(u)}{u}.\\
\end{eqnarray*}

Hence, for every $u,t>0$ we have $$\phi(u)\lesssim\frac{u\varphi(t)}{t\log(1+\frac{u}{t})},$$ and, taking the infimum over $t>0$, we conclude that $\phi(u)\lesssim\widetilde{\varphi}(u)$, as claimed.
\end{proof}

\begin{theorem}\label{Marcinkiewicz}
Let $\varphi:\mathbb{R}_+\rightarrow\mathbb{R}_+\cup\{0\}$ be a quasi-concave function satisfying \eqref{filo}. The following statements are equivalent:
\begin{enumerate}
\item[{(i)}] There exists a quasi-concave function $\phi$, such that $\Lambda_\varphi=R(M_\phi)$.
\item[{(ii)}] $\Lambda_\varphi=R(M_{\widetilde{\varphi}})$.
\item[{(iii)}] There exists $K>0$ such that for all $t>0$, there is $u_t>0$ satisfying
$$
\inf_{r>0} \frac{\varphi(r)}{r\log(1+ {u_t}/{r})}\leq \frac{\varphi(t)}{t\log(1+ {u_t}/{t})} \leq K \inf_{r>0} \frac{\varphi(r)}{r\log(1+ {u_t}/{r})}.
$$
\item[{(iv)}] There exist sequences of positive real numbers $(a_k)$, $(b_k)$ such that
$$
\varphi(t)\approx t\sup_k b_k \log\Big(1+\frac{a_k}{t}\Big).
$$
\end{enumerate}
\end{theorem}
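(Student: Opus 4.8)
The plan is to translate every statement into an equivalence between fundamental functions, using that $R(M_\phi)=\Lambda_{W_{M_\phi}}$ and that two minimal Lorentz spaces coincide exactly when their fundamental functions are equivalent \cite{CPSS}. Throughout I would use the explicit formula already computed in the proofs of Lemmas~\ref{contenido} and~\ref{maximal phi tilde},
$$
W_{M_\phi}(t)=\sup_{u>0}(E_{1/t}g)^{**}(u)\phi(u)=t\sup_{u>0}\frac{\phi(u)}{u}\log\Big(1+\frac{u}{t}\Big).
$$
With this dictionary, the equivalence of (i) and (ii) is immediate from the lemmas: (ii)$\Rightarrow$(i) is trivial (take $\phi=\widetilde\varphi$), while if (i) holds for some $\phi$, then $\Lambda_\varphi\subset R(M_\phi)$ forces $\phi\lesssim\widetilde\varphi$ by Lemma~\ref{maximal phi tilde}, and the chain $\Lambda_\varphi\subset R(M_{\widetilde\varphi})\subset R(M_\phi)=\Lambda_\varphi$ from that same lemma collapses to give exactly (ii).

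For (ii)$\Leftrightarrow$(iii) I would observe that Lemma~\ref{contenido} already supplies $W_{M_{\widetilde\varphi}}(t)\le\varphi(t)$ for every $t$, so (ii), that is $\varphi\approx W_{M_{\widetilde\varphi}}$, is equivalent to the single reverse bound $\varphi(t)\lesssim W_{M_{\widetilde\varphi}}(t)$. Expanding the definition of $\widetilde\varphi$ inside the formula above gives
$$
W_{M_{\widetilde\varphi}}(t)=t\sup_{u>0}\log\Big(1+\frac{u}{t}\Big)\inf_{r>0}\frac{\varphi(r)}{r\log(1+u/r)},
$$
so the reverse bound says precisely that for each $t$ one may pick a near-maximiser $u_t$ with $\frac{\varphi(t)}{t\log(1+u_t/t)}\lesssim\inf_{r>0}\frac{\varphi(r)}{r\log(1+u_t/r)}$. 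Since the infimum is trivially at most its value at $r=t$, the two inequalities together are exactly the double estimate in (iii), with $K$ absorbing the implied constant. The only delicate point here is that the supremum need not be attained, which is harmless because (iii) tolerates a multiplicative constant.

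For (ii)$\Rightarrow$(iv) I would discretise the supremum defining $W_{M_{\widetilde\varphi}}$. Setting $a_k=2^k$ and $b_k=\widetilde\varphi(2^k)/2^k$ for $k\in\mathbb Z$, and using that $\widetilde\varphi$ is increasing and $\widetilde\varphi(u)/u$ is decreasing (Lemma~\ref{varphi-tilde}) together with the elementary bound $\log(1+2x)\le2\log(1+x)$, every $u\in[2^k,2^{k+1})$ contributes at most $2b_k\log(1+a_k/t)$ to the supremum, while the sample points $u=a_k$ give the reverse comparison; hence
$$
W_{M_{\widetilde\varphi}}(t)\approx t\sup_k b_k\log\Big(1+\frac{a_k}{t}\Big),
$$
and (ii) turns this into (iv).

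Finally, for (iv)$\Rightarrow$(i) I would build $\phi$ directly from the sequences by setting $\phi(u)=\sup_k b_k\min(u,a_k)$, a supremum of quasi-concave functions and so quasi-concave (the hypotheses on $\varphi$ force it to be finite and nonzero), with $\phi(u)/u=\sup_k b_k\min(1,a_k/u)$. Inserting this into the formula for $W_{M_\phi}$ and using that $\log(1+x)/x$ is decreasing, one verifies $\sup_{u>0}\log(1+u/t)\min(1,a_k/u)=\log(1+a_k/t)$, attained at $u=a_k$, whence $W_{M_\phi}(t)=t\sup_k b_k\log(1+a_k/t)\approx\varphi(t)$ and therefore $R(M_\phi)=\Lambda_\varphi$. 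I expect the main obstacles to be the two index computations, namely the discretisation in (ii)$\Rightarrow$(iv) and the interchange of suprema with the exact evaluation of $\sup_u\log(1+u/t)\min(1,a_k/u)$ in (iv)$\Rightarrow$(i), along with checking that the constructed $\phi$ is genuinely quasi-concave; once these are settled, the remainder is routine bookkeeping with the fundamental-function dictionary.
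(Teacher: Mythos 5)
Your proposal is correct, and its first half coincides with the paper's proof: the equivalences (i)$\Leftrightarrow$(ii) and (ii)$\Leftrightarrow$(iii) are handled exactly as in the paper, via Lemma~\ref{maximal phi tilde} (maximality of $\widetilde\varphi$) and Lemma~\ref{contenido} (the one-sided bound $W_{M_{\widetilde\varphi}}\le\varphi$), respectively. Where you genuinely diverge is in the treatment of condition (iv). The paper proves (ii)$\Rightarrow$(iv) by invoking the Brudny\u{\i}--Kruglyak resolution of the quasi-concave function $\widetilde\varphi$ (\cite[Proposition 3.2.6]{BK}), which supplies a sequence $(t_{2k+1})$ with $\widetilde{\varphi}(u)\approx\sup_k\widetilde{\varphi}(t_{2k+1})\min(1,u/t_{2k+1})$, and then evaluates the resulting supremum exactly; you instead discretise dyadically with $a_k=2^k$, $b_k=\widetilde\varphi(2^k)/2^k$, using only the monotonicity properties of Lemma~\ref{varphi-tilde} and the elementary inequality $\log(1+2x)\le 2\log(1+x)$. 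Your version is self-contained (no citation to \cite{BK}) and comes with an explicit constant; the paper's is shorter once the covering lemma is granted. For the closing implication, the paper goes (iv)$\Rightarrow$(ii) directly, by sampling the supremum defining $W_{M_{\widetilde\varphi}}$ at $u=a_j$ and noting $\inf_r\big(\sup_k b_k\log(1+a_k/r)\big)/\log(1+a_j/r)\ge b_j$, which is a two-line estimate; you go (iv)$\Rightarrow$(i) by constructing $\phi(u)=\sup_k b_k\min(u,a_k)$ and computing $W_{M_\phi}(t)=t\sup_k b_k\log(1+a_k/t)$ exactly, then closing the cycle through (i)$\Rightarrow$(ii). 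Your computation is sound (the interchange of suprema is unconditional, and $\sup_u\min(1,a_k/u)\log(1+u/t)=\log(1+a_k/t)$ follows from monotonicity of $\log(1+x)/x$), and the construction has the merit of exhibiting an explicit Marcinkiewicz space solving the equation; amusingly, your $\phi$ is exactly the kind of function the paper extracts from \cite{BK} in the opposite direction. The only point you should flesh out is the parenthetical claim that $\phi$ is finite: this does follow from (iv) and the finiteness of $\varphi$, since for fixed $t$ one has $b_k\min(1,a_k/u)\le b_k\log(1+a_k/t)/\log(1+u/t)$, but it deserves a line rather than an aside.
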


\begin{proof}
$(i)\Leftrightarrow (ii)$: Let us suppose first that $\Lambda_\varphi=R(M_\phi)$ for some $\phi$. Then, by Lemma \ref{maximal phi tilde}, it follows that $\phi\lesssim\widetilde{\varphi}$. So we have that $M_{\widetilde{\varphi}}\subset M_\phi$. Now, this fact, together with Lemma \ref{contenido} yield
$$\Lambda_\varphi\subset R(M_{\widetilde{\varphi}})\subset R(M_\phi).$$ Since by hypothesis $\Lambda_\varphi=R(M_\phi)$, we must also have $\Lambda_\varphi=R(M_{\widetilde{\varphi}})$. This proves the implication $(i)\Rightarrow (ii)$. Since the converse is immediate, both are equivalent.

$(ii)\Leftrightarrow (iii)$: By Lemma \ref{contenido}, we have $\Lambda_\varphi=R(M_{\widetilde{\varphi}})$ if and only if there is $K>0$ such that $\varphi(t)\leq K W_{M_{\widetilde{\varphi}}}(t)$, for every $t>0$. This means that
$$
\varphi(t)\leq K \sup_{u>0} t\log\Big(1+\frac{u}{t}\Big)\inf_{r>0} \frac{\varphi(r)}{r\log(1+ {u}/{r})},
$$
which is equivalent to $(iii)$.

$(ii)\Leftrightarrow (iv)$: Suppose first that there exist sequences of positive real numbers $(a_k)$, $(b_k)$ such that
$$
\varphi(t)\approx t\sup_k b_k \log\Big(1+\frac{a_k}{t}\Big).
$$
Since
$$
W_{M_{\widetilde{\varphi}}}(t)\approx\sup_u t\log\Big(1+\frac{u}{t}\Big)\inf_r\frac{1}{\log(1+ {u}/{r})}\sup_k b_k\log\Big(1+\frac{a_k}{r}\Big),
$$
in particular, for $u=a_j$, we have
$$
W_{M_{\widetilde{\varphi}}}(t)\gtrsim t\log\Big(1+\frac{a_j}{t}\Big)\inf_r\frac{1}{\log(1+ {u}/{r})}\sup_k b_k\log\Big(1+\frac{a_k}{r}\Big)\geq b_j t\log\Big(1+\frac{a_j}{t}\Big).
$$
This holds for every $j$, so we also get that $W_{M_{\widetilde{\varphi}}}(t)\gtrsim\varphi(t)$. Since the reverse inequality holds by Lemma \ref{contenido}, we have that $\Lambda_\varphi=R(M_{\widetilde{\varphi}})$.

Conversely, if $\varphi(t)\approx W_{M_{\widetilde{\varphi}}}(t)$ then
$$
\varphi(t)\approx\sup_u \frac{t}{u}\log\Big(1+\frac{u}{t}\Big)\widetilde{\varphi}(u).
$$
Now, since $\widetilde{\varphi}$ is quasi-concave, by \cite[Proposition 3.2.6]{BK} (see also \cite{KMP}) there is an increasing sequence $(t_k)_{k\in\mathbb{Z}}$ of positive numbers such that $\widetilde{\varphi}(t_{2k+2})\approx\widetilde{\varphi}(t_{2k+1})$, ${\widetilde{\varphi}/(t_{2k})}{t_{2k}}\approx{\widetilde{\varphi}/(t_{2k+1})}{t_{2k+1}}$ and
$$\widetilde{\varphi}(u)\approx\sup_k\widetilde{\varphi}(t_{2k+1})\min\Big(1,\frac{u}{t_{2k+1}}\Big).$$ In particular, we have
\begin{eqnarray*}
  \varphi(t) &\approx& \sup_u\frac{t}{u}\log\Big(1+\frac{u}{t}\Big)\sup_k \widetilde{\varphi}(t_{2k+1})\min\Big(1,\frac{u}{t_{2k+1}}\Big)\\
   &=& t\sup_k \widetilde{\varphi}(t_{2k+1})\max\bigg\{\sup_{u\leq t_{2k+1}}\frac{\log(1+ {u}/{t})}{t_{2k+1}},\sup_{u>t_{2k+1}}\frac{\log(1+{u}/{t})}{u}\bigg\}\\
   &=& t\sup_k \frac{\widetilde{\varphi}(t_{2k+1})}{t_{2k+1}}\log\Big(1+\frac{t_{2k+1}}{t}\Big).
\end{eqnarray*}
\end{proof}

Notice also that in Theorem \ref{Marcinkiewicz}, the best constant $K$ appearing in $(iii)$ coincides with the best norm of the isomorphism between $\Lambda_\varphi$ and $R(M_{\widetilde{\varphi}})$.

\medskip

We prove next a very important feature of the function $\widetilde{\varphi}$, namely that it coincides with the fundamental function of the optimal range $\mathfrak{R}[S,\Lambda_\varphi]$.

\begin{proposition}\label{fundamental function optimal range}
Given $\varphi$ satisfying \eqref{filo}, we have that
$$\varphi_{\mathfrak{R}[S,\Lambda_\varphi]}(t)=\|\chi_{(0,t)}\|_{\mathfrak{R}[S,\Lambda_\varphi]}\approx\widetilde{\varphi}(t).$$
\end{proposition}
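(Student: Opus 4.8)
The plan is to prove the two-sided estimate $\widetilde\varphi(t)\lesssim\varphi_{\mathfrak R[S,\Lambda_\varphi]}(t)\lesssim\widetilde\varphi(t)$ by squeezing the fundamental function of $X:=\mathfrak R[S,\Lambda_\varphi]$ between two multiples of $\widetilde\varphi$, extracting each inequality from the embeddings already built around the operation $R$ and the Marcinkiewicz spaces $M_{\varphi_X}$ and $M_{\widetilde\varphi}$. The computation that drives everything is the elementary identity, valid for any quasi-concave $\phi$,
$$
W_{M_\phi}(t)=\|E_{1/t}g\|_{M_\phi}=\sup_{u>0}(E_{1/t}g)^{**}(u)\phi(u)=\sup_{u>0}\frac{t}{u}\log\Big(1+\frac{u}{t}\Big)\phi(u),
$$
which is exactly the expression already exploited in Lemma~\ref{contenido} and Example~\ref{ex210}.

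For the upper estimate $\varphi_X(t)\lesssim\widetilde\varphi(t)$ I would repeat the argument of Example~\ref{ex210} verbatim. Since $X\subset M_{\varphi_X}$ for every r.i.\ space, the monotonicity of $R$ together with $\Lambda_\varphi\subset R(X)$ (which holds because $S:\Lambda_\varphi\to X$ is bounded, by Lemma~\ref{lemma R-S}) yields $\Lambda_\varphi\subset R(M_{\varphi_X})$. Reading this inclusion back through Lemma~\ref{lemma R-S}\,[(i)$\Rightarrow$(ii)] applied with $M_{\varphi_X}$ gives $W_{M_{\varphi_X}}(t)\lesssim\varphi(t)$, that is,
$$
\frac{t}{u}\log\Big(1+\frac{u}{t}\Big)\varphi_X(u)\lesssim\varphi(t)\qquad\text{for all }u,t>0.
$$
Solving for $\varphi_X(u)$ and taking the infimum over $t>0$ produces $\varphi_X(u)\lesssim\widetilde\varphi(u)$ directly from the definition \eqref{fitil} of $\widetilde\varphi$.

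For the lower estimate $\widetilde\varphi(t)\lesssim\varphi_X(t)$ the idea is to exhibit $M_{\widetilde\varphi}$ as an admissible range space for $S$ and then invoke the minimality of $X$. By Lemma~\ref{varphi-tilde}, $\widetilde\varphi$ is quasi-concave, so $M_{\widetilde\varphi}$ is a genuine r.i.\ Banach space; and Lemma~\ref{contenido} is precisely the statement $W_{M_{\widetilde\varphi}}(t)\le\varphi(t)$. By Lemma~\ref{lemma R-S}\,[(ii)$\Rightarrow$(iii)] this forces $S:\Lambda_\varphi\to M_{\widetilde\varphi}$ to be bounded, whence, by the defining minimality of $\mathfrak R[S,\Lambda_\varphi]$ among r.i.\ Banach range spaces for $S$, we get $X\subset M_{\widetilde\varphi}$. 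Passing to fundamental functions (the inclusion $X\subset Y$ gives $\varphi_Y\lesssim\varphi_X$) and using $\varphi_{M_{\widetilde\varphi}}=\widetilde\varphi$ yields $\widetilde\varphi(t)\lesssim\varphi_X(t)$, and combining with the previous paragraph gives $\varphi_X(t)\approx\widetilde\varphi(t)$.

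The routine ingredients are the computation of $W_{M_\phi}$ and the monotonicity of fundamental functions under inclusion; the only point demanding care is the bookkeeping of which auxiliary Marcinkiewicz space plays the role of $X$ in Lemma~\ref{lemma R-S} ($M_{\varphi_X}$ for the upper bound, $M_{\widetilde\varphi}$ for the lower one), together with verifying that each is an admissible r.i.\ Banach space so that the lemma and the operation $R$ actually apply. This is where condition \eqref{filo} is genuinely needed: Lemma~\ref{varphi-tilde}(iii) guarantees $\widetilde\varphi(t)\gtrsim\min\{1,t\}$, which keeps $M_{\widetilde\varphi}$ nondegenerate and inside the admissible range and prevents the infimum defining $\widetilde\varphi$ from collapsing to zero.
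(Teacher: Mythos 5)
Your proof is correct and follows essentially the same route as the paper: the lower bound $\widetilde{\varphi}\lesssim\varphi_{\mathfrak{R}[S,\Lambda_\varphi]}$ comes from Lemma~\ref{contenido} plus the minimality of $\mathfrak{R}[S,\Lambda_\varphi]$, giving $\mathfrak{R}[S,\Lambda_\varphi]\subset M_{\widetilde{\varphi}}$, and the upper bound from $X\subset M_{\varphi_X}$ together with the chain $\Lambda_\varphi\subset R(X)\subset R(M_{\varphi_X})$. The only cosmetic difference is that for the upper bound you unfold the computation of $W_{M_{\varphi_X}}$ and the infimum over $t$ explicitly, which is precisely the content of Lemma~\ref{maximal phi tilde} that the paper cites directly.
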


\begin{proof}
First, note that by Lemma \ref{contenido} we have $\Lambda_\varphi\subset R(M_{\widetilde{\varphi}})$. Hence, by Lemma \ref{lemma R-S} we have that the operator $$S:\Lambda_\varphi\rightarrow M_{\widetilde{\varphi}}$$ is bounded. Therefore, we must have $$\mathfrak{R}[S,\Lambda_\varphi]\subset M_{\widetilde{\varphi}},$$ which implies that $\varphi_{\mathfrak{R}[S,\Lambda_\varphi]}(t)\gtrsim\widetilde{\varphi}(t).$

Now, let $\psi(t)$ denote $\varphi_{\mathfrak{R}[S,\Lambda_\varphi]}(t)$. Since $\varphi$ satisfies \eqref{filo}, we have that $$\mathfrak{R}[S,\Lambda_\varphi]\subset M_\psi.$$ Hence, it follows that $$\Lambda_\varphi\subset R(\mathfrak{R}[S,\Lambda_\varphi])\subset R(M_\psi).$$ Therefore, by Lemma \ref{maximal phi tilde}, we have that $\widetilde{\varphi}(t)\gtrsim \psi(t)=\varphi_{\mathfrak{R}[S,\Lambda_\varphi]}(t)$, as claimed.
\end{proof}

We know \cite[Theorem 2.2]{Rodriguez-Soria} that in the case when $\overline{\beta}_\varphi<1$, then $R(X)=\Lambda_\varphi$ for every r.i.\ space with fundamental function equivalent to $\varphi$. In particular, we have $R(M_\varphi)=\Lambda_\varphi$, so by Theorem \ref{Marcinkiewicz} we also have $R(M_{\widetilde{\varphi}})=\Lambda_\varphi$. We will see now that, in fact in this case, $\widetilde{\varphi}\approx\varphi$.

Recall that given a quasi-concave function $\varphi$ we define \cite{Bennett-Sharpley}
$$
\overline{\varphi}(t)=\sup_{s>0}\frac{\varphi(ts)}{\varphi(s)},
$$
which is a submultiplicative function (it is actually the smallest submultiplicative function larger than $\varphi$).

\begin{lemma}\label{lema beta<1}
Let $\varphi$ be a quasi-concave function satisfying \eqref{filo}. Then,
\begin{enumerate}
\item[$(i)$] $\overline{\widetilde{\varphi}}\leq\overline{\varphi}$.
\item[$(ii)$] If $\overline{\beta}_\varphi<1$ we have that  $\overline{\beta}_{\widetilde{\varphi}}<1$.
\item[$(iii)$] $\varphi(t)\approx\widetilde{\varphi}(t)$ holds if and only if $$\overline{\varphi}(t)\lesssim\frac{t}{\log(1+t)}.$$
\end{enumerate}
\end{lemma}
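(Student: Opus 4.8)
The plan is to treat the three items in sequence, using (i) as the engine for (ii), while (iii) is an essentially self-contained computation. Throughout I would rely only on the defining formula $\widetilde{\varphi}(t)=\inf_{r>0}\frac{t\varphi(r)}{r\log(1+t/r)}$, the always-valid bound $\widetilde{\varphi}\lesssim\varphi$ noted after \eqref{fitil}, and the elementary inequality $\varphi(t\rho)\leq\overline{\varphi}(t)\varphi(\rho)$, which is immediate from $\overline{\varphi}(t)=\sup_{s>0}\varphi(ts)/\varphi(s)$.

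For (i) I would unwind $\widetilde{\varphi}(ts)=\inf_{r>0}\frac{ts\,\varphi(r)}{r\log(1+ts/r)}$ and apply the change of variable $r=t\rho$, which turns this into $\inf_{\rho>0}\frac{s\,\varphi(t\rho)}{\rho\log(1+s/\rho)}$. Substituting $\varphi(t\rho)\leq\overline{\varphi}(t)\varphi(\rho)$ lets the factor $\overline{\varphi}(t)$ (which is independent of $\rho$) come out of the infimum, and the remaining infimum is exactly $\widetilde{\varphi}(s)$. This gives $\widetilde{\varphi}(ts)\leq\overline{\varphi}(t)\widetilde{\varphi}(s)$ for all $s,t>0$; dividing by $\widetilde{\varphi}(s)$ and taking the supremum over $s$ yields $\overline{\widetilde{\varphi}}(t)\leq\overline{\varphi}(t)$.

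Part (ii) then costs nothing: since $\log$ is increasing and (i) gives $\overline{\widetilde{\varphi}}(s)\leq\overline{\varphi}(s)$, we have $\frac{\log\overline{\widetilde{\varphi}}(s)}{\log s}\leq\frac{\log\overline{\varphi}(s)}{\log s}$ for every $s>1$, and taking the infimum over $s>1$ gives $\overline{\beta}_{\widetilde{\varphi}}\leq\overline{\beta}_\varphi<1$.

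For (iii), the starting remark is that, because $\widetilde{\varphi}\lesssim\varphi$ always holds, the equivalence $\varphi\approx\widetilde{\varphi}$ reduces to the single reverse inequality $\varphi\lesssim\widetilde{\varphi}$. Writing out $\widetilde{\varphi}$, the bound $\varphi(t)\leq c\,\widetilde{\varphi}(t)$ for all $t$ is equivalent (the infimum over $r$ becoming a universal quantifier over $r$) to $\varphi(t)\leq c\,\frac{t\varphi(r)}{r\log(1+t/r)}$ for all $t,r>0$, i.e. $\frac{\varphi(t)}{\varphi(r)}\leq c\,\frac{t/r}{\log(1+t/r)}$. The substitution $s=t/r$ recasts this as $\frac{\varphi(sr)}{\varphi(r)}\leq c\,\frac{s}{\log(1+s)}$ for all $r,s>0$, and taking the supremum over $r$ on the left recovers $\overline{\varphi}(s)$, producing exactly $\overline{\varphi}(s)\lesssim\frac{s}{\log(1+s)}$. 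Since every step is reversible, this is a genuine equivalence. The computations are all elementary; the only place demanding care is the bookkeeping in (iii), namely verifying that the ``$\inf_r$ versus for-all-$r$'' and the ``$\sup_r$ recovering $\overline{\varphi}$'' passages are valid in both directions, together with the opening observation that one half of $\varphi\approx\widetilde{\varphi}$ is automatic.
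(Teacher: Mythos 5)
Your proposal is correct and follows essentially the same route as the paper: part (i) via rescaling the variable in the infimum defining $\widetilde{\varphi}$ (your substitution $r=t\rho$ together with $\varphi(t\rho)\leq\overline{\varphi}(t)\varphi(\rho)$ is the same manipulation as the paper's choice $r=us$ inside the infimum), part (ii) as an immediate consequence of (i), and part (iii) by the identical chain of reversible rewritings reducing $\varphi\lesssim\widetilde{\varphi}$ to $\overline{\varphi}(s)\lesssim s/\log(1+s)$. No gaps; the division by $\widetilde{\varphi}(s)$ in (i) is justified since Lemma \ref{varphi-tilde}(iii) gives $\widetilde{\varphi}>0$ under \eqref{filo}.
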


\begin{proof}
$(i)$ By definition, for every $s>0$ we have
\begin{eqnarray*}
\overline{\widetilde{\varphi}}(s) &=& \sup_{t>0}\frac{\widetilde{\varphi}(ts)}{\widetilde{\varphi}(t)} \\
&=& \sup_{t>0}\sup_{u>0}\frac{u\log(1+t/u)}{t\varphi(u)}\inf_{r>0}\frac{ts\varphi(r)}{r\log(1+ts/r)}\\
&\leq& \sup_{u>0} \frac{\varphi(us)}{\varphi(u)}=\overline{\varphi}(s),
\end{eqnarray*}
where we just picked $r=us$ to get the last inequality.

$(ii)$ Follows immediately from $(i)$.

$(iii)$ Since $\widetilde{\varphi}(t)\lesssim\varphi(t)$, then  the equivalence of these two functions holds if and only if $$\varphi(t)\lesssim\widetilde{\varphi}(t)=\inf_{s>0}\frac{s\varphi(t/s)}{\log(1+s)},$$ for every $t>0$. This is the same as
$$
\frac{\varphi(t)}{\varphi(t/s)}\lesssim\frac{s}{\log(1+s)},
$$
for every $s,t>0$. Equivalently, this means that $$\overline{\varphi}(s)=\sup_{r>0}\frac{\varphi(rs)}{\varphi(r)}=\sup_{t>0}\frac{\varphi(t)}{\varphi(t/s)}\lesssim\frac{s}{\log(1+s)}.$$
\end{proof}

\begin{theorem}\label{equivalence varphi-varphitilde}
Let $\varphi$ be a quasi-concave function  satisfying \eqref{filo}. We have that $\overline{\beta}_\varphi<1$ if and only if $\varphi(t)\approx\widetilde{\varphi}(t)$.
\end{theorem}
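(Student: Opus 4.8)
The plan is to reduce the entire statement to the single scalar estimate already isolated in Lemma~\ref{lema beta<1}$(iii)$. Since $\widetilde{\varphi}\lesssim\varphi$ always holds, that lemma converts the equivalence $\varphi(t)\approx\widetilde{\varphi}(t)$ into the inequality
$$
\overline{\varphi}(s)\lesssim\frac{s}{\log(1+s)}.
$$
Thus it suffices to prove that $\overline{\beta}_\varphi<1$ if and only if $\overline{\varphi}(s)\lesssim s/\log(1+s)$ for all $s>0$. Throughout I will exploit the two elementary features of $\overline{\varphi}$ that come from quasi-concavity of $\varphi$: that $\overline{\varphi}$ is increasing and submultiplicative, and that $\overline{\varphi}(s)\le1$ for $s\le1$ while $1\le\overline{\varphi}(s)\le s$ for $s\ge1$.

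For the forward direction I would assume $\overline{\beta}_\varphi<1$ and fix $\theta$ with $\overline{\beta}_\varphi<\theta<1$. By the definition of the infimum defining $\overline{\beta}_\varphi$, there is some $s_1>1$ with $\overline{\varphi}(s_1)<s_1^{\theta}$. Iterating submultiplicativity gives $\overline{\varphi}(s_1^{n})\le\overline{\varphi}(s_1)^{n}<(s_1^{n})^{\theta}$, and, since $\overline{\varphi}$ is increasing, interpolating between consecutive powers of $s_1$ (choosing $n$ with $s_1^{n}\le s<s_1^{n+1}$) upgrades this to a uniform power bound $\overline{\varphi}(s)\lesssim s^{\theta}$ valid for every $s\ge1$. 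Because $1-\theta>0$, the function $\log(1+s)/s^{1-\theta}$ is bounded on $[1,\infty)$, so $s^{\theta}\lesssim s/\log(1+s)$ there; for $s\le1$ I would simply use $\overline{\varphi}(s)\le1\le s/\log(1+s)$. Combining the two ranges yields $\overline{\varphi}(s)\lesssim s/\log(1+s)$ for all $s>0$.

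For the converse I would assume $\overline{\varphi}(s)\le C\,s/\log(1+s)$ and choose any fixed $s_1>1$ large enough that $\log(1+s_1)>C$ (for instance $s_1=e^{2C}$); this forces $\overline{\varphi}(s_1)<s_1$, hence $\log\overline{\varphi}(s_1)/\log s_1<1$. Since $\overline{\beta}_\varphi$ is the infimum of these quotients over $s>1$, I immediately conclude $\overline{\beta}_\varphi<1$. Notice that this half uses only one admissible value of $s$ and needs neither submultiplicativity nor the power bound.

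I expect the only delicate point to be the forward direction: turning the one-point estimate $\overline{\varphi}(s_1)<s_1^{\theta}$ into a genuine power bound $\overline{\varphi}(s)\lesssim s^{\theta}$ valid for \emph{all} $s$ (this is exactly where submultiplicativity and monotonicity of $\overline{\varphi}$ are needed), together with verifying the elementary comparison $s^{\theta}\lesssim s/\log(1+s)$ uniformly across the three regimes $s\le1$, bounded $s\ge1$, and $s\to\infty$. Everything else is bookkeeping around Lemma~\ref{lema beta<1}$(iii)$.
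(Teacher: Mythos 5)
Your proof is correct, and it takes a genuinely different route from the paper's in \emph{both} directions. The paper proves the forward implication functorially: $\overline{\beta}_\varphi<1$ gives $R(M_\varphi)=\Lambda_\varphi$ by \cite[Theorem 2.2]{Rodriguez-Soria}, hence $R(M_{\widetilde{\varphi}})=\Lambda_\varphi$ by Theorem~\ref{Marcinkiewicz}, while Lemma~\ref{lema beta<1}(ii) together with \cite[Theorem 2.2]{Rodriguez-Soria} gives $R(M_{\widetilde{\varphi}})=\Lambda_{\widetilde{\varphi}}$, whence $\Lambda_\varphi=\Lambda_{\widetilde{\varphi}}$ and $\varphi\approx\widetilde{\varphi}$. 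You instead stay entirely at the scalar level: you upgrade the one-point estimate $\overline{\varphi}(s_1)<s_1^{\theta}$ to the power bound $\overline{\varphi}(s)\lesssim s^{\theta}$ on $[1,\infty)$ via submultiplicativity and monotonicity of $\overline{\varphi}$, compare with $s/\log(1+s)$, and feed the result into Lemma~\ref{lema beta<1}(iii), which is indeed stated as an equivalence and so may be used in that direction. For the converse, the paper also passes through Lemma~\ref{lema beta<1}(iii), but then verifies $\int_1^\infty\overline{\varphi}(s)s^{-2}\,ds<\infty$ by blocking over intervals $[a^n,a^{n+1}]$ with $\overline{\varphi}(a)<a$ and invokes \cite[\S 3, Lemma 5.9]{Bennett-Sharpley}; your observation that a single $s_1>1$ with $\overline{\varphi}(s_1)<s_1$ already forces the infimum defining $\overline{\beta}_\varphi$ below $1$ is valid and shorter, precisely because the paper defines $\overline{\beta}_\varphi$ as an infimum (had the index been defined as a limit, you would additionally need the standard fact that the infimum equals the limit for submultiplicative functions, a step the paper's integral-criterion route sidesteps). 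What your approach buys is an elementary, self-contained argument with explicit quantitative content ($\overline{\varphi}(s)\lesssim s^{\theta}$ for any $\theta>\overline{\beta}_\varphi$), independent of \cite[Theorem 2.2]{Rodriguez-Soria}, of Theorem~\ref{Marcinkiewicz}, and of the criterion from \cite{Bennett-Sharpley}; what the paper's route buys is a structural proof that exhibits the theorem as a consequence of the machinery around the functor $R$ that the paper develops and needs anyway.
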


\begin{proof}
First, let us suppose that $\overline{\beta}_{\varphi}<1$. Hence, by \cite[Theorem 2.2]{Rodriguez-Soria}
\begin{equation}\label{1}
R(M_\varphi)=\Lambda_\varphi.
\end{equation}
In particular, by Theorem \ref{Marcinkiewicz} we also have
\begin{equation}\label{2}
R(M_{\widetilde{\varphi}})=\Lambda_\varphi.
\end{equation}
On the other hand, by Lemma \ref{lema beta<1}~(ii) it holds that $\overline{\beta}_{\widetilde{\varphi}}<1$, which, by \cite[Theorem 2.2]{Rodriguez-Soria} implies that
\begin{equation}\label{3}
R(M_{\widetilde{\varphi}})=\Lambda_{\widetilde{\varphi}}.
\end{equation}
Now, putting together \eqref{1}-\eqref{3} we get that $\Lambda_\varphi=\Lambda_{\widetilde{\varphi}},$ which   is equivalent to $\varphi(t)\approx\widetilde{\varphi}(t)$.

Conversely, if the equivalence $\varphi(t)\approx\widetilde{\varphi}(t)$ holds, then by Lemma \ref{lema beta<1}~(iii) we now that
$$
\overline{\varphi}(t)\lesssim\frac{t}{\log(1+t)},
$$
for every $t>0$. Let us consider $a>1$ large enough so that ${\overline{\varphi}(a)}<a$. We have that

\begin{eqnarray*}
\int_1^\infty\overline{\varphi}(s)\frac{ds}{s^2} &=& \sum_{n=0}^\infty\int_{a^n}^{a^{n+1}}\frac{\overline{\varphi}(s)}{s^2}ds\\
&=&\sum_{n=0}^\infty\int_1^a\frac{\overline{\varphi}(a^nv)}{a^nv^2}dv\\
&\leq&\sum_{n=0}^\infty\Big(\frac{\overline{\varphi}(a)}{a}\Big)^n\int_1^a\frac{\overline{\varphi}(v)}{v^2}dv.
\end{eqnarray*}
Since ${\overline{\varphi}(a)}<a$, this is a convergent series, and using \cite[\S 3 Lemma 5.9]{Bennett-Sharpley} we conclude that $\overline{\beta}_\varphi<1$.
\end{proof}

Note that for a quasi-concave function $\varphi(t)$ satisfying \eqref{filo}, then:
\begin{equation}\label{eqca}
\min\{1,t\}\lesssim \widetilde{\varphi}(t)\lesssim\frac{t}{\log(1+t)}.
\end{equation}
In the following results we study the    equality cases in \eqref{eqca}, and prove  some important properties of the solution $R(X)=\Lambda_{\varphi}$ for the corresponding spaces:

\begin{proposition}\label{4.9}
The equivalence $$\widetilde{\varphi}(t)\approx\frac{t}{\log(1+t)}$$ holds if and only if $\varphi(t)\approx\max\{1,t\}$. Moreover, if $\psi(t)= {t}/{\log(1+t)}$,  then the Marcinkiewicz space $M_\psi$ is  minimal among the r.i.\ Banach spaces $X$   satisfying that $R(X)=\Lambda_{\max\{1,t\}}=L^1\cap L^\infty$.
\end{proposition}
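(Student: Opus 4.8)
The statement splits into two essentially independent claims: the characterization $\widetilde{\varphi}\approx t/\log(1+t)\Leftrightarrow\varphi\approx\max\{1,t\}$, and the minimality of $M_\psi$. The plan is to settle the equivalence first, exploiting the explicit substitution $r=t/s$ in \eqref{fitil}, and then to deduce the minimality from a direct computation of the fundamental weight $W_{M_\psi}$ together with the already known minimality of $M_\psi$ among spaces with nonzero $R$.

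For the implication $\varphi\approx\max\{1,t\}\Rightarrow\widetilde{\varphi}\approx\psi$ I would simply compute, using that $\varphi\mapsto\widetilde{\varphi}$ is monotone by \eqref{fitil}, so that it suffices to evaluate $\widetilde{\varphi}$ when $\varphi(t)=\max\{1,t\}$. Substituting $r=t/s$ gives
$$
\widetilde{\varphi}(t)=\inf_{s>0}\frac{s\max\{1,t/s\}}{\log(1+s)}=\inf_{s>0}\frac{\max\{s,t\}}{\log(1+s)}.
$$
On $(0,t]$ this quotient is $t/\log(1+s)$, which is decreasing, and on $[t,\infty)$ it is $s/\log(1+s)$, which is increasing; hence the infimum is attained at $s=t$ and equals $t/\log(1+t)=\psi(t)$. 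For the converse direction the upper bound $\varphi(t)\lesssim\max\{1,t\}$ is automatic, since quasi-concavity gives $\varphi(t)\le\varphi(1)\max\{1,t\}$ for every $t>0$. The genuine content is the lower bound, which I would extract from $\widetilde{\varphi}(t)\gtrsim t/\log(1+t)$: because $\widetilde{\varphi}$ is itself an infimum, this forces $\frac{s\varphi(t/s)}{\log(1+s)}\gtrsim \frac{t}{\log(1+t)}$ for all $s,t>0$, and writing $u=t/s$ this rearranges to
$$
\varphi(u)\gtrsim u\,\frac{\log(1+s)}{\log(1+us)},\qquad s>0.
$$
Optimizing over $s$ (letting $s\to\infty$ when $u\ge1$ and $s\to0^+$ when $u\le1$) yields $\varphi(u)\gtrsim\max\{1,u\}$, and combined with the automatic upper bound we conclude $\varphi\approx\max\{1,t\}$.

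For the minimality of $M_\psi$, I would first verify that $M_\psi$ belongs to the relevant class by computing its fundamental weight. Since $E_{1/t}g$ is already decreasing, $(E_{1/t}g)^{**}(u)=\frac{t}{u}\log(1+u/t)$, so
$$
W_{M_\psi}(t)=\sup_{u>0}\frac{t}{u}\log\Big(1+\frac{u}{t}\Big)\frac{u}{\log(1+u)}=\sup_{u>0}\frac{t\log(1+u/t)}{\log(1+u)}=\max\{1,t\},
$$
whence $R(M_\psi)=\Lambda_{\max\{1,t\}}=L^1\cap L^\infty$ (this also follows from the earlier identity $R(M_\psi)=\Lambda_{\max\{1,t\}}$ of \cite{Rodriguez-Soria}). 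For the minimality itself, let $X$ be any r.i.\ Banach space with $R(X)=L^1\cap L^\infty$. Then in particular $R(X)\neq\{0\}$, so the known minimality of $M_\psi$ among all r.i.\ Banach spaces with nonzero $R$ \cite[Proposition 3.5]{Rodriguez-Soria} immediately gives $M_\psi\subset X$, which is exactly the asserted minimality.

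The main obstacle is the lower bound in the converse direction of the equivalence: one must justify passing from the bound on the infimum defining $\widetilde{\varphi}$ to a pointwise bound on $\varphi$, and then choose the optimal dilation $s$ in the two regimes $u\ge1$ and $u\le1$. Concretely, the work reduces to checking that $\sup_{s>0}\frac{\log(1+s)}{\log(1+us)}=\max\{1,1/u\}$ and that the extremal values are genuinely approached at the endpoints. Everything else is either a one-variable optimization (as in the two displayed infima/suprema) or a direct appeal to \cite[Proposition 3.5]{Rodriguez-Soria}.
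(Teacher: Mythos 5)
Your proposal is correct and takes essentially the same approach as the paper's proof: your key estimate $\sup_{s>0}\frac{\log(1+s)}{\log(1+us)}=\max\{1,1/u\}$ is, up to a change of variables, the paper's computation $\inf_{t>0}\frac{\log(1+t)}{\log(1+t/r)}=\min\{1,r\}$, the only cosmetic difference being that you extract the pointwise bound $\varphi(u)\gtrsim\max\{1,u\}$ directly, where the paper passes through $\min\bigl\{\lim_{r\to0}\varphi(r),\lim_{r\to\infty}\varphi(r)/r\bigr\}\geq C$ and then uses quasi-concavity. The minimality argument is identical to the paper's: $R(X)=L^1\cap L^\infty$ implies $R(X)\neq\{0\}$, so \cite[Proposition 3.5]{Rodriguez-Soria} gives $M_\psi\subset X$.
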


\begin{proof}
It is easy to see that if  $\varphi(t)=\max\{1,t\}$, then  the equivalence $\widetilde{\varphi}(t)\approx \psi(t)$ holds. Let us now prove the converse result. We have that  $\widetilde{\varphi}\approx\psi$ if and only if there is some constant $C>0$ such that $\widetilde{\varphi}\geq C\psi(t)$. This means that
\begin{eqnarray*}
  C &\leq& \frac{\log(1+t)}{t}\inf_r \frac{t\varphi(r)}{r\log(1+ {t}/{r})} \\
   &=& \inf_r\frac{\varphi(r)}{r}\inf_t\frac{\log(1+t)}{\log(1+ {t}/{r})}\\
   &=& \inf_r\frac{\varphi(r)}{r}\min\{1,r\}\\
   &=& \min\Big\{\inf_{r\leq1}\varphi(r),\inf_{r>1}\frac{\varphi(r)}{r}\Big\}\\
   &=& \min\Big\{\lim_{r\rightarrow0}\varphi(r),\lim_{r\rightarrow\infty}\frac{\varphi(r)}{r}\Big\},
\end{eqnarray*}
and we get that $\max\{1,t\}\lesssim\varphi(t)$. The converse inequality is always true for a quasi-concave function.

Suppose now that $X$ satisfies that $R(X)=L^1\cap L^\infty$. Then $R(X)\neq\{0\}$ and, by the minimality of $M_\psi$ \cite[Proposition 3.5]{Rodriguez-Soria} among the r.i.\ Banach spaces with this property, we   have that $M_\psi\subset X$ as claimed.
\end{proof}

\begin{proposition}\label{4.10}
If $\varphi(t)\approx t\log(1+1/t)$, then $\widetilde{\varphi}(t)\approx\min\{1,t\}$ and, moreover,  $L^1+L^\infty$ is the unique r.i.\ Banach space $X$ such that $R(X)=\Lambda_\varphi$.
\end{proposition}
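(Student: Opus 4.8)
The plan is to establish the two claims separately: first the computation $\widetilde{\varphi}\approx\min\{1,t\}$, and then the uniqueness of the solution, which will follow from the extremal position of $L^1+L^\infty$ inside the category of r.i.\ Banach spaces.

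For the first claim, note that $\varphi\approx t\log(1+1/t)$ satisfies \eqref{filo}, so Lemma~\ref{varphi-tilde}(iii) already provides the lower bound $\widetilde{\varphi}(t)\gtrsim\min\{1,t\}$, and only the reverse estimate needs an argument. I would work directly from
$$
\widetilde{\varphi}(t)\approx t\inf_{r>0}\frac{\log(1+1/r)}{\log(1+t/r)}.
$$
Letting $r\to 0^+$ makes the quotient tend to $1$, which gives $\widetilde{\varphi}(t)\le t$ for every $t$ and settles the case $t\le 1$; choosing instead $r=t$ gives $\widetilde{\varphi}(t)\lesssim t\log(1+1/t)\le 1$ for $t\ge 1$, since $\log(1+1/t)\le 1/t$. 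Together these yield $\widetilde{\varphi}(t)\lesssim\min\{1,t\}$, hence $\widetilde{\varphi}(t)\approx\min\{1,t\}$. Alternatively, this is immediate from Proposition~\ref{fundamental function optimal range} and Example~\ref{ex210}, since $\widetilde{\varphi}\approx\varphi_{\mathfrak{R}[S,\Lambda_\varphi]}=\varphi_{L^1+L^\infty}=\min\{1,t\}$.

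For the uniqueness, I would first record that $L^1+L^\infty$ is indeed a solution, because $R(L^1+L^\infty)=\Lambda_\phi=\Lambda_\varphi$ as in Example~\ref{ex210}. Now let $X$ be any r.i.\ Banach space with $R(X)=\Lambda_\varphi$ and let $\varphi_X$ be its fundamental function. Since every r.i.\ Banach space on $(0,\infty)$ satisfies $X\subset L^1+L^\infty$, we have $\min\{1,t\}=\varphi_{L^1+L^\infty}(t)\lesssim\varphi_X(t)$, whence $M_{\varphi_X}\subset M_{\min\{1,t\}}=L^1+L^\infty$. Applying the monotonicity of $R$ along the chain $X\subset M_{\varphi_X}\subset L^1+L^\infty$ I would squeeze
$$
\Lambda_\varphi=R(X)\subset R(M_{\varphi_X})\subset R(L^1+L^\infty)=\Lambda_\varphi,
$$
so that $\Lambda_\varphi\subset R(M_{\varphi_X})$. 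By the maximality of $\widetilde{\varphi}$ (Lemma~\ref{maximal phi tilde}) this forces $\varphi_X\lesssim\widetilde{\varphi}\approx\min\{1,t\}$, and combined with the lower bound above I conclude $\varphi_X\approx\min\{1,t\}$. Finally, using $\Lambda_{\min\{1,t\}}=L^1+L^\infty$ I get $L^1+L^\infty=\Lambda_{\varphi_X}=\Lambda(X)\subset X\subset L^1+L^\infty$, hence $X=L^1+L^\infty$.

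The argument is essentially the one carried out in Example~\ref{ex210}, now applied to an arbitrary solution $X$ in place of $\mathfrak{R}[S,\Lambda_\varphi]$; the only points that require care are the correct directions of the inclusions $M_{\varphi_X}\subset L^1+L^\infty$ and $R(X)\subset R(M_{\varphi_X})$, together with the identifications $\Lambda_{\min\{1,t\}}=M_{\min\{1,t\}}=L^1+L^\infty$, which I expect to be the main (though routine) obstacle to pin down precisely.
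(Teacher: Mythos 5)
Your proposal is correct and follows essentially the same route as the paper's own proof: it rests on $R(L^1+L^\infty)=\Lambda_\varphi$ from Example~\ref{ex210}, the chain $\Lambda_\varphi=R(X)\subset R(M_{\varphi_X})$ via $X\subset M_{\varphi_X}$, the maximality of $\widetilde{\varphi}$ (Lemma~\ref{maximal phi tilde}) to force $\varphi_X\approx\min\{1,t\}$, and the identifications $\Lambda_{\min\{1,t\}}=M_{\min\{1,t\}}=L^1+L^\infty$ to squeeze $X$. The only difference is cosmetic: you argue at the level of fundamental functions rather than Marcinkiewicz spaces, and you write out the ``easy calculation'' $\widetilde{\varphi}\approx\min\{1,t\}$ that the paper omits.
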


\begin{proof}
That $\widetilde{\varphi}(t)\approx\min\{1,t\}$ is an easy calculation. Now, recall that we have already seen in Example~\ref{ex210} that $R(L^1+L^\infty)=\Lambda_\varphi$. To finish, suppose $X$ is an r.i.\ Banach space such that $R(X)=\Lambda_\varphi$. Clearly, we have that $X\subset L^1+L^\infty$. Moreover, let $\varphi_X$ denote the fundamental function of this space. Since $X\subset M_{\varphi_X}$ we have that
$$
\Lambda_\varphi=R(X)\subset R(M_{\varphi_X}).
$$
Hence, by  Lemma~\ref{maximal phi tilde}, we conclude that
$$
M_{\widetilde{\varphi}}\subset M_{\varphi_X}\subset L^1+L^\infty.
$$
Since $\widetilde{\varphi}(t)=\min\{1,t\}$, it follows that $M_{\widetilde{\varphi}}=M_{\varphi_X}=L^1+L^\infty$ and, therefore, ${\widetilde{\varphi}}\approx \varphi_X$. But,
$$
L^1+L^\infty=\Lambda_{\min\{1,t\}}=\Lambda_{\varphi_X}\subset X\subset M_{\varphi_X}=M_{\min\{1,t\}}=L^1+L^\infty.
$$
\end{proof}

\begin{example}{\rm
We have seen in Theorem~\ref{equivalence varphi-varphitilde} that if $\overline{\beta}_\varphi<1$, then $R(M_{\widetilde{\varphi}})=\Lambda_\varphi$.   Propositions~\ref{4.9} and \ref{4.10} show that this also holds for particular choices of $\varphi$ with $\overline{\beta}_\varphi=1.$ Let us see one further example. If $\psi(t)= {t}/{\log(1+t)}$, then $\overline{\beta}_\psi=1$ and
$$
{\widetilde{\psi}}(t)=\frac{t}{\sup_{r>0}\log(1+t/r)\log(1+r)}.
$$
We observe that the function $f_r(t)=\log(1+t/r)\log(1+r)$ satisfies that $f_t(r)=f_t(t/r)$, and hence the supremum is attained when $r=t/r$; i.e., $r=\sqrt{t}$. Therefore,
$$
{\widetilde{\psi}}(t)=\frac{t}{\log^2(1+\sqrt{t})}.
$$
An easy calculation now shows that
$$
W_{M_{\widetilde{\psi}}}(t)\approx \psi(t),
$$
and hence $R(M_{\widetilde{\psi}})=\Lambda_\psi$.
}\end{example}

We are going to analyze another approach in order to study the validity of the equation $R(M_{\widetilde{\varphi}})=\Lambda_\varphi$. First, we recall that there is a canonical involution in the cone of quasi-concave functions so that, for each such $\varphi$, we can consider $\varphi^{\,i}$ defined by
$$
\varphi^{\,i}(t)=\frac{1}{\varphi(1/t)},
$$
which is also quasi-concave. We now set
\begin{equation}\label{trfi}
\varphi^{\vartriangle}(t)=\widetilde{\varphi}^{\,i}(t).
\end{equation}

\begin{theorem}
Let $\varphi$ a quasi-concave function satisfying \eqref{filo} and let $\varphi^{\vartriangle}$ be as in \eqref{trfi}. Then,
\begin{enumerate}
\item[{(i)}]
$\varphi^{\vartriangle}$ satisfies \eqref{filo}.\\ [-3mm]
\item[{(ii)}]
$\varphi^{\vartriangle\vartriangle}(t)=W_{M_{\widetilde{\varphi}}}(t)$. \\ [-3mm]
\item[{(iii)}]
$\varphi^{\vartriangle\vartriangle\vartriangle}(t)=\varphi^\vartriangle(t).$\\ [-3mm]
\item[{(iv)}]
$R(M_{\widetilde{\varphi}})=\Lambda_\varphi$ if and only if $\varphi^{\vartriangle\vartriangle}(t)\approx\varphi(t)$.
\item[{(v)}]
$\varphi^{\vartriangle}\lesssim\overline{\varphi}$.
\end{enumerate}
\end{theorem}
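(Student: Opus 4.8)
The plan is to reduce all five parts to a single computation identifying $\varphi^{\vartriangle}$ with a Marcinkiewicz fundamental function. Unwinding the definitions $\varphi^{\vartriangle}=\widetilde{\varphi}^{\,i}$ and \eqref{fitil} and substituting $r=1/u$, I obtain
\[
\varphi^{\vartriangle}(t)=\frac{1}{\widetilde{\varphi}(1/t)}=\sup_{r>0}\frac{tr\log(1+\tfrac1{tr})}{\varphi(r)}=\sup_{u>0}\frac{t}{u}\log\Big(1+\frac{u}{t}\Big)\varphi^{\,i}(u)=W_{M_{\varphi^{\,i}}}(t),
\]
so $\varphi^{\vartriangle}$ is exactly the $W$-function of the Marcinkiewicz space built on the involuted function $\varphi^{\,i}$. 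Writing $T\phi:=W_{M_\phi}$, the whole statement becomes an exercise about the two operators $T$ and $\widetilde{\,\cdot\,}$. The structural fact I will exploit is that, by Lemmas~\ref{contenido} and~\ref{maximal phi tilde}, they form a Galois connection $T\dashv\widetilde{\,\cdot\,}$: pointwise $T\phi\le\chi$ if and only if $\phi\le\widetilde{\chi}$, where the counit inequality $T\widetilde{\chi}\le\chi$ holds \emph{with constant one} (this is precisely the estimate carried out in the proof of Lemma~\ref{contenido}, choosing $r=t$ in the infimum).

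The two inequalities (i) and (v) are then immediate pointwise estimates. For (i), I use the right-hand bound in \eqref{eqca}, namely $\widetilde{\varphi}(s)\lesssim s/\log(1+s)$ (which follows from \eqref{fitil} by taking $r=1$), so that $\varphi^{\vartriangle}(t)=1/\widetilde{\varphi}(1/t)\gtrsim t\log(1+1/t)$, which is exactly \eqref{filo}. For (v), I start from $\varphi^{\vartriangle}(t)=\sup_{r}tr\log(1+\tfrac1{tr})/\varphi(r)$ and apply \eqref{filo} at the point $tr$, giving $tr\log(1+\tfrac1{tr})\le C^{-1}\varphi(tr)$; hence $\varphi^{\vartriangle}(t)\le C^{-1}\sup_{r}\varphi(tr)/\varphi(r)=C^{-1}\overline{\varphi}(t)$.

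Part (ii) follows by applying the displayed identity to $\varphi^{\vartriangle}$ in place of $\varphi$: $\varphi^{\vartriangle\vartriangle}=T\big((\varphi^{\vartriangle})^{\,i}\big)$, and since $i$ is an involution, $(\varphi^{\vartriangle})^{\,i}=(\widetilde{\varphi}^{\,i})^{\,i}=\widetilde{\varphi}$, whence $\varphi^{\vartriangle\vartriangle}=T\widetilde{\varphi}=W_{M_{\widetilde{\varphi}}}$. Part (iv) is then a soft consequence: Lemma~\ref{contenido} always gives $\Lambda_\varphi\subset R(M_{\widetilde{\varphi}})$, i.e.\ $W_{M_{\widetilde{\varphi}}}\lesssim\varphi$, so these two minimal Lorentz spaces coincide precisely when the reverse inequality $\varphi\lesssim W_{M_{\widetilde{\varphi}}}$ holds, that is when $\varphi\approx W_{M_{\widetilde{\varphi}}}=\varphi^{\vartriangle\vartriangle}$ by (ii).

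The only part with real content is (iii), and it is where I expect the main obstacle: one must recognise that the required identity is the standard triangle law of the Galois connection $T\dashv\widetilde{\,\cdot\,}$. Applying (ii) to $\varphi^{\vartriangle}$ (legitimate because (i) shows $\varphi^{\vartriangle}$ satisfies \eqref{filo}) gives $\varphi^{\vartriangle\vartriangle\vartriangle}=W_{M_{\widetilde{\varphi^{\vartriangle}}}}=T\big(\widetilde{T\psi}\big)$ with $\psi=\varphi^{\,i}$. The unit inequality $\psi\le\widetilde{T\psi}$ (obtained from the adjunction applied to the trivial $T\psi\le T\psi$), upon applying the monotone operator $T$, yields $T\psi\le T\widetilde{T\psi}$; while the counit inequality $T\widetilde{\chi}\le\chi$ at $\chi=T\psi$ yields $T\widetilde{T\psi}\le T\psi$. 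Combining the two gives the \emph{exact} equality $T(\widetilde{T\psi})=T\psi$, hence $\varphi^{\vartriangle\vartriangle\vartriangle}=T\psi=\varphi^{\vartriangle}$. The delicate point to watch is that (iii) must be an equality rather than a mere equivalence $\approx$, which forces me to check that the counit bound from Lemma~\ref{contenido} genuinely holds with constant one, so that no constants leak into the triangle identity.
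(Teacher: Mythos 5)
Your proof is correct and takes essentially the same route as the paper: your identity $\varphi^{\vartriangle}=W_{M_{\varphi^{\,i}}}$ and the exact (constant-one) adjunction $W_{M_\phi}\le\chi\Leftrightarrow\phi\le\widetilde{\chi}$ are precisely the sup--inf computations behind the paper's proof of (ii) and behind Lemmas \ref{contenido} and \ref{maximal phi tilde}, and your arguments for (i), (iv) and (v) are the paper's verbatim. For (iii), your unit-plus-monotonicity derivation of $\varphi^{\vartriangle}\le\varphi^{\vartriangle\vartriangle\vartriangle}$ is the formal dual of the paper's step (which applies the order-reversing operation $\vartriangle$ to the inequality $W_{M_{\widetilde{\varphi}}}\le\varphi$), while the reverse inequality is in both cases Lemma \ref{contenido} applied to $\varphi^{\vartriangle}$ with constant one, exactly the point you correctly flag as necessary for equality rather than mere equivalence.
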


\begin{proof}
We have seen in \eqref{eqca} that if $\varphi(t)\gtrsim t\log(1+1/t)$, then $\widetilde{\varphi}(t)$ is quasi-concave and $\widetilde{\varphi}(t)\lesssim {t}/{\log(1+t)}$, from where it follows that $\varphi^\vartriangle(t)\gtrsim t\log(1+1/t)$, which is $(i)$.

We now prove $(ii)$:

\begin{eqnarray*}
  \varphi^{\vartriangle\vartriangle}(t) &=& \frac{1}{\displaystyle\inf_r\frac{\varphi^\vartriangle(r)}{tr\log(1+\frac{1}{tr})}}=\sup_r\frac{tr\log(1+\frac{1}{tr})}{\varphi^\vartriangle(r)} \\
   &=& \sup_r\frac{tr\log(1+\frac{1}{tr})}{\displaystyle\sup_s\frac{rs\log(1+\frac{1}{rs})}{\varphi(s)}}\\
   &=& \sup_r t\log\Big(1+\frac{1}{tr}\Big)\inf_s\frac{\varphi(s)}{s\log(1+\frac{1}{rs})}\\
   &=& W_{M_{\widetilde{\varphi}}}(t).
\end{eqnarray*}

Let us prove $(iii)$: By Lemma~\ref{contenido} we have that  $W_{M_{\widetilde{\varphi}}}(t)\leq\varphi(t)$, and hence, using $(ii)$ and the involution property:
$$
\varphi^{\vartriangle\vartriangle\vartriangle}(t)=(W_{M_{\widetilde{\varphi}}})^\vartriangle(t)\geq\varphi^\vartriangle(t).
$$

For the converse inequality, we apply again Lemma \ref{contenido}, but with the function $\varphi^\vartriangle$; that is;
$$
\varphi^{\vartriangle\vartriangle\vartriangle}(t)=W_{M_{\widetilde{\varphi^\vartriangle}}}(t)\leq\varphi^\vartriangle(t).
$$

Finally, $R(M_{\widetilde{\varphi}})=\Lambda_\varphi$ if and only if $W_{M_{\widetilde{\varphi}}}\approx\varphi$, and $(iv)$ follows from $(ii)$.

For the proof of $(v)$, note that
\begin{equation*}
\overline{\varphi}(t)=\sup_{s>0}\frac{\varphi(ts)}{\varphi(s)}\gtrsim\sup_{s>0}\frac{ts\log\Big(1+\displaystyle\frac{1}{ts}\Big)}{\varphi(s)}=\frac{1}{\underset{s>0}{\inf}\displaystyle\frac{\varphi(s)}{ts\log\Big(1+\displaystyle\frac{1}{ts}\Big)}}=\frac{1}{\widetilde{\varphi}(1/t)}.
\end{equation*}

\end{proof}

\begin{corollary}
If $\varphi_1$ and $\varphi_2$ satisfy \eqref{filo}, then
$$
R(M_{\widetilde{\varphi_1}})=R(M_{\widetilde{\varphi_2}})\Leftrightarrow
W_{M_{\widetilde{\varphi_1}}}(t)\approx W_{M_{\widetilde{\varphi_2}}}(t)\Leftrightarrow \widetilde{\varphi_1}(t)\approx\widetilde{\varphi_2}(t).
$$
\end{corollary}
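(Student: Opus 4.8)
The plan is to prove the two equivalences $(A)\Leftrightarrow(B)$ and $(B)\Leftrightarrow(C)$, where I abbreviate the three assertions as $(A)$: $R(M_{\widetilde{\varphi_1}})=R(M_{\widetilde{\varphi_2}})$, $(B)$: $W_{M_{\widetilde{\varphi_1}}}\approx W_{M_{\widetilde{\varphi_2}}}$, and $(C)$: $\widetilde{\varphi_1}\approx\widetilde{\varphi_2}$. The equivalence $(A)\Leftrightarrow(B)$ I would settle purely from the definition $R(X)=\Lambda_{W_X}$. Each $W_{M_{\widetilde{\varphi_i}}}$ is quasi-concave and vanishes at the origin, so the spaces in $(A)$ are genuine minimal Lorentz spaces, and the fact recorded in Lemma~\ref{lemma R-S} (from \cite{CPSS}) that $\Lambda_u\subset\Lambda_v$ holds iff $v\lesssim u$ gives at once that $\Lambda_{W_{M_{\widetilde{\varphi_1}}}}=\Lambda_{W_{M_{\widetilde{\varphi_2}}}}$ iff $W_{M_{\widetilde{\varphi_1}}}\approx W_{M_{\widetilde{\varphi_2}}}$; equivalently, one just compares fundamental functions, using $\varphi_{\Lambda_u}=u$.

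For $(C)\Rightarrow(B)$ I would use the explicit expression $W_{M_\phi}(t)=\sup_{u>0}\frac{t}{u}\log(1+u/t)\,\phi(u)$ obtained in the proofs of Lemmas~\ref{contenido} and \ref{maximal phi tilde}. This functional is positively homogeneous and monotone in $\phi$, so $\widetilde{\varphi_1}\approx\widetilde{\varphi_2}$ immediately forces $W_{M_{\widetilde{\varphi_1}}}\approx W_{M_{\widetilde{\varphi_2}}}$. The same remark shows that the tilde operation $\varphi\mapsto\widetilde\varphi$ of \eqref{fitil} preserves equivalence, since $\widetilde\varphi$ is an infimum of terms that are positively homogeneous and monotone in $\varphi$; I will need this below.

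The substance of the corollary, and the step I expect to be the main obstacle, is $(B)\Rightarrow(C)$, which amounts to recovering $\widetilde\varphi$ from $W_{M_{\widetilde\varphi}}$. Here I would first establish the identity $\widetilde{W_{M_{\widetilde\varphi}}}\approx\widetilde\varphi$ using the preceding theorem. Indeed, by part (ii) there, $W_{M_{\widetilde\varphi}}=\varphi^{\vartriangle\vartriangle}$, so by part (iii) and the definition \eqref{trfi} of the operation $\vartriangle$ one gets $\big(\widetilde{W_{M_{\widetilde\varphi}}}\big)^{\,i}=(W_{M_{\widetilde\varphi}})^{\vartriangle}=\varphi^{\vartriangle\vartriangle\vartriangle}=\varphi^{\vartriangle}=\widetilde\varphi^{\,i}$. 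Since the involution $\psi\mapsto\psi^{\,i}$ reflects equivalence (reciprocals of equivalent positive functions are equivalent), this yields $\widetilde{W_{M_{\widetilde\varphi}}}\approx\widetilde\varphi$.

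Finally, applying the (equivalence-preserving) tilde operation to the hypothesis $(B)$ and invoking this identity twice, first with $\varphi=\varphi_1$ and then with $\varphi=\varphi_2$, gives $\widetilde{\varphi_1}\approx\widetilde{W_{M_{\widetilde{\varphi_1}}}}\approx\widetilde{W_{M_{\widetilde{\varphi_2}}}}\approx\widetilde{\varphi_2}$, which is exactly $(C)$. This closes the chain of equivalences. The only delicate point is the bookkeeping in the third paragraph: one must check that both $\psi\mapsto\psi^{\,i}$ and $\psi\mapsto\widetilde\psi$ behave well with respect to the equivalence relation $\approx$ (with no loss of constants), but once the identity $\widetilde{W_{M_{\widetilde\varphi}}}\approx\widetilde\varphi$ is in hand the rest is formal.
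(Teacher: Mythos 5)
Your proof is correct and is essentially the argument the paper intends: the corollary is stated right after the theorem on $\varphi^{\vartriangle}$ precisely because, as you do, the first equivalence follows from $R(X)=\Lambda_{W_X}$ and the criterion $\Lambda_u\subset\Lambda_v\Leftrightarrow v\lesssim u$, the implication from $\widetilde{\varphi_1}\approx\widetilde{\varphi_2}$ follows from monotonicity and homogeneity of $\phi\mapsto W_{M_\phi}$, and the converse follows from parts (ii) and (iii) of that theorem (your identity $\widetilde{W_{M_{\widetilde\varphi}}}\approx\widetilde\varphi$ is exactly $\varphi^{\vartriangle\vartriangle\vartriangle}=\varphi^{\vartriangle}$ read through the involution, which, like the tilde operation, manifestly preserves $\approx$). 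One side remark is inaccurate but harmless: $W_{M_{\widetilde{\varphi_i}}}$ need not vanish at the origin (for $\varphi(t)=\max\{1,t\}$ one has $W_{M_{\widetilde{\varphi}}}(0^+)=\varphi(0^+)=1$), though this costs nothing since quasi-concavity alone is all the first equivalence requires.
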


\section{Lorentz spaces}

In this section we study under which conditions we have that, given a quasi-concave function $\varphi$ satisfying \eqref{filo}, there exists a Lorentz space $\Lambda_\psi$ such that $\Lambda_\varphi=R(\Lambda_\psi)$.

\begin{theorem}\label{lorf}
Let $\varphi$ be a quasi-concave function satisfying \eqref{filo}, and  let $\widehat{\varphi}(t)=(\varphi^j)'(1/t)$, where $\varphi^j(t)=t\varphi(1/t)$. If $\widehat{\varphi}$ is quasi-concave and $\underset{s\rightarrow\infty}{\lim}\varphi(s)/s=0$, then
$$
R(\Lambda_{\widehat{\varphi}})=\Lambda_\varphi.
$$
\end{theorem}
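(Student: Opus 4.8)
The plan is to reduce the identity $R(\Lambda_{\widehat{\varphi}})=\Lambda_\varphi$ to a single pointwise equivalence of fundamental functions. By definition $R(\Lambda_{\widehat{\varphi}})=\Lambda_{W_{\Lambda_{\widehat\varphi}}}$, and two minimal Lorentz spaces with equivalent fundamental functions coincide, so it suffices to show that $W_{\Lambda_{\widehat\varphi}}(t)\approx\varphi(t)$. To compute $W_{\Lambda_{\widehat\varphi}}$ I would use \eqref{wxeg} together with the observation that the function $s\mapsto(E_{1/t}g)(s)=1/(1+s/t)=t/(t+s)$ is already decreasing in $s$, hence equal to its own decreasing rearrangement. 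Therefore
$$
W_{\Lambda_{\widehat\varphi}}(t)=\|E_{1/t}g\|_{\Lambda_{\widehat\varphi}}=\int_0^\infty\frac{t}{t+s}\,d\widehat\varphi(s),
$$
and the whole problem becomes the identification of this Lebesgue--Stieltjes integral with $\varphi(t)$ up to constants.

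The central computation would be to turn the integral above into a clean averaging of $\varphi$ against a probability kernel. Differentiating $\varphi^j(u)=u\varphi(1/u)$ gives the identity $\widehat{\varphi}(t)=(\varphi^j)'(1/t)=\varphi(t)-t\varphi'(t)$. I would then integrate by parts twice: first writing $\int_0^\infty\frac{t}{t+s}\,d\widehat\varphi(s)=\int_0^\infty\frac{t}{(t+s)^2}\widehat\varphi(s)\,ds$, and then substituting $\widehat\varphi(s)=\varphi(s)-s\varphi'(s)$ and integrating the $s\varphi'(s)$ term by parts, arriving at
$$
W_{\Lambda_{\widehat\varphi}}(t)=\int_0^\infty\frac{2t^2}{(t+s)^3}\,\varphi(s)\,ds.
$$
This is precisely where the two hypotheses enter. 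The quasi-concavity of $\widehat{\varphi}$ guarantees both that $\Lambda_{\widehat\varphi}$ is a genuine Lorentz space and that $\varphi^j$ is concave, so that $(\varphi^j)'$ is monotone and the Stieltjes manipulations are legitimate. The assumption $\lim_{s\to\infty}\varphi(s)/s=0$ is equivalent to $\varphi^j(0^+)=0$, and it is exactly what forces $\widehat\varphi(s)/s\to0$ as $s\to\infty$, which makes the boundary contributions at infinity (together with the possible atom of $d\widehat\varphi$ at the origin) cancel out and leave no spurious additive constant.

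The final step is the elementary averaging estimate $\int_0^\infty\frac{2t^2}{(t+s)^3}\varphi(s)\,ds\approx\varphi(t)$, which follows purely from quasi-concavity of $\varphi$, noting that the kernel $K_t(s)=2t^2/(t+s)^3$ integrates to $1$ and concentrates near $s\approx t$. For the upper bound I would use $\varphi(s)\le(1+s/t)\varphi(t)$, so that $\int_0^\infty K_t(s)\varphi(s)\,ds\le\varphi(t)\int_0^\infty\frac{2t}{(t+s)^2}\,ds=2\varphi(t)$. For the lower bound I would discard all of the integral except $s\in[t,2t]$ and use that $\varphi$ is increasing, giving $\int_0^\infty K_t(s)\varphi(s)\,ds\ge\varphi(t)\int_t^{2t}K_t(s)\,ds=\frac{5}{36}\varphi(t)$. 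Combining the two inequalities yields $W_{\Lambda_{\widehat\varphi}}\approx\varphi$, and hence $R(\Lambda_{\widehat\varphi})=\Lambda_\varphi$.

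The main obstacle is the rigorous justification of the integration by parts, since $\varphi$ (and thus $\varphi^j$ and $\widehat\varphi$) is only assumed quasi-concave and therefore merely locally absolutely continuous, differentiable almost everywhere. I would handle this entirely within the Lebesgue--Stieltjes framework, exploiting the concavity of $\varphi^j$ to give meaning to $d\widehat\varphi$ and to $(\varphi^j)'$ as a monotone function, and carefully checking the vanishing of the boundary terms at $0$ and $\infty$; this is the step where the assumptions ``$\widehat{\varphi}$ quasi-concave'' and ``$\lim_{s\to\infty}\varphi(s)/s=0$'' are used in an essential way. I would note, moreover, that the conclusion is robust: even an uncontrolled bounded additive discrepancy of size $\varphi(0^+)\le\varphi(t)$ would not affect the final equivalence $W_{\Lambda_{\widehat\varphi}}\approx\varphi$.
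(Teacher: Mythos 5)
Your proposal is correct, and it reaches the conclusion by a genuinely different route in its second half, so a comparison is worthwhile. Both your argument and the paper's reduce the theorem to the equivalence $W_{\Lambda_{\widehat\varphi}}(t)\approx\varphi(t)$, and both pass through the kernel representation $W_{\Lambda_{\widehat\varphi}}(t)=t\int_0^\infty\frac{\widehat\varphi(r)}{(t+r)^2}\,dr$; the paper derives it from the layer-cake identity $\|f\|_{\Lambda_\psi}=\int_0^\infty\psi(\lambda_f(u))\,du$ plus a change of variables, whereas you derive it by Stieltjes integration by parts, with the atom of $d\widehat\varphi$ at the origin cancelling the boundary term there and the boundary term at infinity vanishing because $\lim_{s\to\infty}\widehat\varphi(s)/s=0$ (this limit is indeed forced by your two hypotheses: concavity of $\varphi^j$ gives $u(\varphi^j)'(u)\le\varphi^j(u)-\varphi^j(0^+)$, and $\varphi^j(0^+)=0$ is exactly $\lim_{s\to\infty}\varphi(s)/s=0$). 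From that common point the two proofs diverge. The paper splits the integral at $r=t$, observes that $W_{\Lambda_{\widehat\varphi}}(t)\approx t\int_0^{1/t}\widehat\varphi(1/r)\,dr$ since the other piece is dominated, and then evaluates this \emph{exactly} by the fundamental theorem of calculus: $\widehat\varphi(1/r)=(\varphi^j)'(r)$, so the integral equals $t\varphi^j(1/t)-t\varphi^j(0^+)=\varphi(t)$. You instead transfer the remaining derivative onto the kernel by a second integration by parts, obtaining $\int_0^\infty\frac{2t^2}{(t+s)^3}\varphi(s)\,ds$, and finish with a two-sided averaging estimate; your computations check out (the kernel has total mass $1$, the bound $\varphi(s)\le(1+s/t)\varphi(t)$ gives the upper constant $2$, and restricting to $[t,2t]$ gives the lower constant $5/36$). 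The paper's endgame is shorter because the antiderivative structure of $\widehat\varphi$ does all the work in one stroke; yours costs an extra integration by parts and boundary checks, but it buys an elementary, explicit-constant estimate relying only on quasi-concavity of $\varphi$, and it makes visible precisely where each hypothesis enters (quasi-concavity of $\widehat\varphi$, i.e.\ concavity of $\varphi^j$, legitimizes the Stieltjes manipulations and kills the boundary term at infinity in the first step; $\lim_{s\to\infty}\varphi(s)/s=0$ kills it in the second). The rigor issue you flag is not a genuine gap: quasi-concave functions are locally Lipschitz on $(0,\infty)$, hence locally absolutely continuous, so both integrations by parts go through along the lines you indicate.
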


\begin{proof}

Let us start by computing the fundamental function of $R(\Lambda_\psi)$, for a given quasi-concave function $\psi$:
\begin{eqnarray*}
   W_{\Lambda_\psi}(t) &=& \int_0^\infty\psi(\mu_{E_{1/t}g}(u))\,du=\int_0^1\psi\Big(t\Big(\frac1u-1\Big)\Big)\,du\\
   &=&t\int_0^\infty\frac{\psi(r)}{(t+r)^2}\,dr=t\int_0^t\frac{\psi(r)}{(t+r)^2}\,dr+t\int_t^\infty\frac{\psi(r)}{(t+r)^2}\,dr \\
     &\approx&\frac1t\int_0^t\psi(r)\, dr+t\int_0^{1/t}\psi\Big(\frac1r\Big)\,dr.
\end{eqnarray*}

Now, if we denote $\psi_1(t)=\displaystyle \frac1t\int_0^t\psi(r)\,dr$ and $\psi_2(t)=\displaystyle t\int_0^{1/t}\psi(1/r)\,dr$, since $\psi$ is increasing, it is clear that $$
\psi_1(t)\leq\psi(t)\leq\psi_2(t).
$$
Therefore, it follows that
$$
W_{\Lambda_\psi}(t)\approx t\int_0^{1/t}\psi\Big(\frac1r\Big)\,dr.
$$

Now, since  $\widehat{\varphi}$ is quasi-concave, then
$$
W_{\Lambda_{\widehat{\varphi}}}(t)\approx t\int_0^{1/t}\widehat{\varphi}\Big(\frac1r\Big)dr=\varphi(t)-t\lim_{s\rightarrow0}s\varphi(1/s)=\varphi(t).
$$
\end{proof}

\begin{example}{\rm
If $\overline{\beta}_\varphi<1$, then we know that $R(\Lambda_\varphi)=\Lambda_\varphi$. If, for example, we take $\varphi(t)=t\log(1+1/t)$, for which $\overline{\beta}_\varphi=1$, then $\hat\varphi(t)=t/(t+1)\approx\min\{1,t\}$, which satisfies the hypothesis of Theorem~\ref{lorf}. Hence $\Lambda_{\widehat{\varphi}}=L^1+L^\infty$ and
$$
R(\Lambda_{\widehat{\varphi}})=\Lambda_{t\log(1+1/t)}.
$$
}\end{example}

\begin{theorem}
Let $\varphi$ be a quasi-concave function satisfying \eqref{filo}. Then,
$\Lambda_\varphi=R(\Lambda_\phi)$ for some quasi-concave function $\phi$ if and only if there exists an increasing sequence $(a_k)_{k\in\mathbb{Z}}$  and a decreasing sequence $(b_k)_{k\in\mathbb{Z}}$, of non-negative real numbers,  and a constant $c\geq0$ such that $$\varphi(t)\approx c+t\sum_{k\in\mathbb{Z}} b_k \log\Big(1+\frac{a_k}{t}\Big).$$
\end{theorem}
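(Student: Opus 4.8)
The plan is to reduce both implications to the explicit formula for the fundamental function of $R(\Lambda_\psi)$ obtained in the proof of Theorem~\ref{lorf}, namely $W_{\Lambda_\psi}(t)\approx t\int_t^\infty \psi(s)s^{-2}\,ds$, valid for any quasi-concave $\psi$. Since $\Lambda_\varphi=R(\Lambda_\phi)$ is equivalent to $\varphi\approx W_{\Lambda_\phi}$, the statement becomes a characterization of those $\varphi$ for which the transform $t\int_t^\infty \phi(s)s^{-2}\,ds$ is equivalent to $\varphi$ for some quasi-concave $\phi$. The elementary building block is the estimate $t\int_t^\infty \min(a,s)s^{-2}\,ds\approx t\log(1+a/t)$, with absolute implied constants (they lie in $[1,1/\log 2]$); this is what links the $\min$-type generators of quasi-concave functions to the $\log$-terms in the statement.

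For sufficiency I would, given a representation $\varphi(t)\approx c+t\sum_k b_k\log(1+a_k/t)$ with $(a_k)$ increasing, $(b_k)$ decreasing and nonnegative, simply set $$\phi(s)=c+\sum_k b_k\min(a_k,s).$$ Being a convergent (since $\varphi$ is finite) sum of quasi-concave functions, $\phi$ is itself quasi-concave; applying the reduction formula termwise, together with the building-block estimate and monotone convergence, gives $W_{\Lambda_\phi}(t)\approx c+t\sum_k b_k\log(1+a_k/t)\approx\varphi(t)$, hence $R(\Lambda_\phi)=\Lambda_{W_{\Lambda_\phi}}=\Lambda_\varphi$. Note that here the monotonicity hypotheses are not even used.

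The necessity direction is where the monotonicity is produced, and is the heart of the matter. Starting from $\varphi\approx t\int_t^\infty\phi(s)s^{-2}\,ds$ for a quasi-concave $\phi$, I would invoke the discretization of quasi-concave functions \cite[Proposition 3.2.6]{BK} to fix a two-sided lacunary sequence $(\sigma_k)_{k\in\mathbb{Z}}$, increasing, along which both $\phi(\sigma_k)$ and $\sigma_k/\phi(\sigma_k)$ grow geometrically and $\phi(s)\approx\sup_k \phi(\sigma_k)\min(1,s/\sigma_k)$. The key point is that this lacunarity upgrades the supremum to a sum, $\phi(s)\approx\sum_k \phi(\sigma_k)\min(1,s/\sigma_k)$, because at any $s$ the terms with $\sigma_k\le s$ form a geometric progression in the values $\phi(\sigma_k)$ and those with $\sigma_k>s$ a geometric progression in the slopes $\phi(\sigma_k)/\sigma_k$, each dominated by its largest term $\approx\phi(s)$. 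Integrating termwise and using $t\int_t^\infty\min(1,s/\sigma_k)s^{-2}\,ds\approx (t/\sigma_k)\log(1+\sigma_k/t)$ then yields $$\varphi(t)\approx t\sum_k b_k\log(1+a_k/t),\qquad a_k=\sigma_k,\quad b_k=\frac{\phi(\sigma_k)}{\sigma_k},$$ with the constant $c=\phi(0{+})$ absorbed separately. Now $(a_k)$ is increasing by construction, and $(b_k)$ is decreasing precisely because $\phi(s)/s$ is decreasing while $(\sigma_k)$ increases; this is exactly where quasi-concavity of $\phi$ forces the required monotonicity of $(b_k)$.

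I expect the main obstacle to be the rigorous passage from the supremum representation to the equivalent summation representation, controlling the two geometric tails uniformly in $s$, and the accompanying bookkeeping for the bi-infinite series: justifying the interchange of sum and integral, the convergence as $k\to\pm\infty$, and the correct handling of the additive constant $c=\phi(0{+})$ together with the normalizations $\phi(\sigma_k)\approx\phi(\sigma_{k\pm1})$ supplied by \cite{BK}. None of this is conceptually deep, but it is what makes the clean termwise computation legitimate.
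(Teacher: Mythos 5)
Your overall strategy coincides with the paper's: both directions reduce to the formula $W_{\Lambda_\psi}(t)\approx t\int_t^\infty \psi(s)s^{-2}\,ds$ established in the proof of Theorem~\ref{lorf}; sufficiency uses exactly the same test function $\phi(s)=c+\sum_k b_k\min(a_k,s)$ with a termwise computation; and necessity rests on the discretization of quasi-concave functions from \cite[Proposition 3.2.6]{BK}, with the same identifications $a_k=\sigma_k$, $b_k=\phi(\sigma_k)/\sigma_k$, $c=\phi(0^+)$, the monotonicity of $(a_k)$ and $(b_k)$ coming from quasi-concavity of $\phi$ just as in the paper. The only genuinely different ingredient is that you derive the summation form of the discretization from the supremum form via the geometric growth/decay of the values and slopes, whereas the paper quotes the sum form directly; that part of your argument is fine in the non-degenerate case.

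There is, however, one genuine gap in your necessity argument: the degenerate case $L:=\lim_{s\to\infty}\phi(s)/s>0$. In that case the slopes satisfy $\phi(\sigma_k)/\sigma_k\geq L$ for all $k$, so $\sigma_k/\phi(\sigma_k)$ is bounded and no two-sided infinite lacunary sequence of the kind you invoke exists; correspondingly, the full statement of \cite[Proposition 3.2.6]{BK}, as quoted in the paper, carries an extra linear term $\lim_{s\to\infty}(\phi(s)/s)\,t$, and your sup-to-sum upgrade breaks down, since the tail $\sum_{\sigma_k>s} s\,\phi(\sigma_k)/\sigma_k\geq sL\cdot\#\{k:\sigma_k>s\}$ diverges. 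You correctly absorb the degeneracy at the origin into $c=\phi(0^+)$, but a linear term at infinity cannot be absorbed into a representation of the form $c+t\sum_k b_k\log(1+a_k/t)$. The fix is the paper's first step, and it is short: if $L>0$ then $\Lambda_\phi=\Lambda_{\phi_0}\cap L^1$, and since $E_{1/t}g\notin L^1$ (equivalently $R(L^1)=\{0\}$) one gets $W_{\Lambda_\phi}\equiv\infty$, so $R(\Lambda_\phi)=\{0\}\neq\Lambda_\varphi$; hence the hypothesis $\Lambda_\varphi=R(\Lambda_\phi)$ forces $L=0$, and from there your argument goes through.
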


\begin{proof}
Suppose first, that $\Lambda_\varphi=R(\Lambda_\phi)$, for some $\phi$. By \cite[Proposition 3.2.6]{BK} (see also \cite{KMP}), $\phi$ is equivalent to a   function of the form
$$
{\Phi}(t)=\sum_{k\in\mathbb{Z}}\phi(t_k)\min\Big(1,\frac{t}{t_k}\Big)+\lim_{s\rightarrow0}\phi(s)+\lim_{s\rightarrow\infty}\frac{\phi(s)}{s}t,
$$
where $(t_k)_{k\in\mathbb{Z}}$ is a sequence in $(0,\infty)$ with $\lim_{k\rightarrow-\infty}t_k=0$ and $\lim_{k\rightarrow+\infty}t_k=+\infty$. Moreover, we can assume that $\lim_{s\rightarrow\infty}{\phi(s)}/{s}=0$, since otherwise we can write $\Lambda_\phi=\Lambda_{\phi_0}\cap L_1$ and
$$
\Lambda_\varphi=R(\Lambda_{\phi_0}\cap L_1)=R(\Lambda_{\phi_0})\cap R(L_1)=0.
$$

Now
$$
\begin{array}{ccl}
 W_{\Lambda_\phi}(t) & \approx & \displaystyle t\int_0^{1/t}{\Phi}\Big(\frac1r\Big)\,dr \\
   & = & t\displaystyle\sum_{k\in\mathbb{Z}}\phi(t_k)\int_0^{1/t}\min\Big(1,\frac{1}{rt_k}\Big)\,dr + t\int_0^{1/t}\underset{s\rightarrow0}{\lim}\phi(s)\,dr \\
   & \approx & t\displaystyle\sum_{k\in\mathbb{Z}}\frac{\phi(t_k)}{t_k}\log\Big(1+\frac{t_k}{t}\Big)+\phi(0^+).
\end{array}
$$

Hence, we can take $a_k=t_k$, $b_k={\phi(t_k)}/{t_k}$ and $c=\phi(0^+)$ so that
$$
\varphi(t)\approx c+t\sum_{k\in\mathbb{Z}} b_k \log\Big(1+\frac{a_k}{t}\Big).
$$

For the converse, assume now that such sequences exist so that
$$
\varphi(t)\approx c+t\sum_{k\in\mathbb{Z}} b_k \log\Big(1+\frac{a_k}{t}\Big).
$$
Let
$$
\phi(t)=c+\sum_{k\in\mathbb{Z}}b_ka_k\min\Big(1,\frac{t}{a_k}\Big).
$$
Now, it is straightforward to check that $\Lambda_\varphi=R(\Lambda_\phi)$.
\end{proof}

\end{document}